\documentclass[12pt]{amsart}
\usepackage{amssymb,amsmath,amsthm,bm}
\usepackage[colorinlistoftodos,prependcaption,textsize=tiny]{todonotes}
\oddsidemargin=-.0cm
\evensidemargin=-.0cm
\textwidth=16cm
\textheight=22cm
\topmargin=0cm
 \definecolor{darkblue}{RGB}{0,0,160}
\usepackage{fouriernc} 
\usepackage[colorlinks=true,allcolors=darkblue]{hyperref}
\DeclareSymbolFont{usualmathcal}{OMS}{cmsy}{m}{n}
\DeclareSymbolFontAlphabet{\mathcal}{usualmathcal}

\usepackage[T1]{fontenc}
\usepackage{esint}
\usepackage{color}


\def\d{{\rm d}}

\def \and{\qquad\text{and}\qquad}

\newcounter{thms}
\newcounter{other}
\numberwithin{other}{section}

\newtheorem{theorem}[thms]{Theorem}
\newtheorem*{theorem*}{Theorem}
\newtheorem*{proposition*}{Proposition}
\newtheorem{corollary}{Corollary}
\newtheorem{lemma}[other]{Lemma}
\newtheorem{definition}[other]{Definition}
\newtheorem{remark}[other]{Remark}

\def\vint_#1{\mathchoice%
      {\mathop{\kern 0.2em\vrule width 0.6em height 0.69678ex depth -0.58065ex
              \kern -0.8em \intop}\nolimits_{\kern -0.4em#1}}%
      {\mathop{\kern 0.1em\vrule width 0.5em height 0.69678ex depth -0.60387ex
              \kern -0.6em \intop}\nolimits_{#1}}%
      {\mathop{\kern 0.1em\vrule width 0.5em height 0.69678ex depth -0.60387ex
              \kern -0.6em \intop}\nolimits_{#1}}%
      {\mathop{\kern 0.1em\vrule width 0.5em height 0.69678ex depth -0.60387ex
              \kern -0.6em \intop}\nolimits_{#1}}}
\def\vintslides_#1{\mathchoice%
      {\mathop{\kern 0.1em\vrule width 0.5em height 0.697ex depth -0.581ex
              \kern -0.6em \intop}\nolimits_{\kern -0.4em#1}}%
      {\mathop{\kern 0.1em\vrule width 0.3em height 0.697ex depth -0.604ex
              \kern -0.4em \intop}\nolimits_{#1}}%
      {\mathop{\kern 0.1em\vrule width 0.3em height 0.697ex depth -0.604ex
              \kern -0.4em \intop}\nolimits_{#1}}%
      {\mathop{\kern 0.1em\vrule width 0.3em height 0.697ex depth -0.604ex
              \kern -0.4em \intop}\nolimits_{#1}}}

\newcommand{\aveint}[2]{\mathchoice%
      {\mathop{\kern 0.2em\vrule width 0.6em height 0.69678ex depth -0.58065ex
              \kern -0.8em \intop}\nolimits_{\kern -0.45em#1}^{#2}}%
      {\mathop{\kern 0.1em\vrule width 0.5em height 0.69678ex depth -0.60387ex
              \kern -0.6em \intop}\nolimits_{#1}^{#2}}%
      {\mathop{\kern 0.1em\vrule width 0.5em height 0.69678ex depth -0.60387ex
              \kern -0.6em \intop}\nolimits_{#1}^{#2}}%
      {\mathop{\kern 0.1em\vrule width 0.5em height 0.69678ex depth -0.60387ex
              \kern -0.6em \intop}\nolimits_{#1}^{#2}}}

\renewcommand*{\cdots}{%
  \mathinner{{\cdotp}{\cdotp}{\cdotp}}%
}

\numberwithin{equation}{section}

\title[Sparse domination of Hilbert transforms along curves]{Sparse domination of Hilbert transforms along curves}

\author[L.\ Cladek]{Laura Cladek}
\address{\noindent Department of Mathematics, The University of British Columbia, \newline \indent 1984 Mathematics Road, Vancouver, BC V6T 1Z2 (L. Cladek)}
\email{cladek@math.ubc.ca}

\author[Y.\ Ou]{Yumeng Ou}
\address{\noindent Department of Mathematics, Massachusetts Institute of Technology, \newline \indent  77 Massachusetts Avenue, Cambridge, MA 02139, USA (Y. Ou)}
\email{yumengou@mit.edu} 

\subjclass[2010]{Primary: 42B20, Secondary: 42B99}
\keywords{sparse domination, Radon transforms, nonisotropic dilations, $L^p$-improving for curves}
\thanks{The authors would like to thank Michael Lacey, Francesco Di Plinio and Ioannis Parissis for helpful discussions. This material is based upon work supported by the National Science Foundation under Grant No. DMS-1440140 while the authors were in residence at the Mathematical Sciences Research Institute in Berkeley, California, during the Spring 2017 semester.}

\begin{document}
\begin{abstract} 
We obtain sharp sparse bounds for Hilbert transforms along curves in $\mathbb{R}^n$, and derive as corollaries weighted norm inequalities for such operators. The curves that we consider include monomial curves and arbitrary $C^n$ curves with nonvanishing torsion.
\end{abstract}
\maketitle

\section{Introduction and main results}
In this article we prove sparse domination theorems for Hilbert transforms along certain classes of smooth curves in $\mathbb{R}^n$. The Hilbert transforms associated with these curves have a natural nonisotropic structure. As a corollary of our sparse domination theorems, we obtain weighted estimates for such operators where the weights belong to nonisotropic Muckenhoupt $A_p$ classes. As far as we are aware of, no prior results on weighted norm inequalities for singular integrals along curves appear to exist in the literature. The two main classes of curves we consider are monomial curves and curves with nonvanishing torsion.

In general, sparse bounds have been recognized as a finer quantification of the boundedness of singular integral operators such as Calder\'on-Zygmund operators (see for instance \cite{CDPO, Lacey1, LM, Lerner} and the references therein). Recently, a sparse domination principle was obtained for spherical maximal functions by Lacey in \cite{Lacey2} (see also the related work \cite{Oberlin} of Oberlin). Our article furthers this line of research and appears to be the first attempt to prove sparse bounds for Radon-type transforms when the underlying dilation structure is nonisotropic. 

We will prove that Hilbert transforms along certain classes of curves can be dominated in the bilinear form sense by positive sparse forms $\Lambda_{r,s}$ for pairs $(r,s)$ that are contained in the interior of the region where the associated single scale operator satisfies certain $L^p$ improving estimates. More precisely, given a curve $\gamma:\,\mathbb{R}\to \mathbb{R}^n$, there is a natural collection $\mathcal{Q}^\gamma$ of (nonisotropic) cubes associated to $\gamma$, which we refer to as \emph{$\gamma$-cubes} (see Definition \ref{gcubedef} and \ref{cubenvt} below). A sparse form is defined with respect to averages of functions on cubes of a \emph{sparse} collection. 
\begin{definition}\label{sparsity}
A collection of $\gamma$-cubes $\mathcal{S}\subset\mathcal{Q}^{\gamma}$ in $\mathbb{R}^n$ is \emph{$\delta$-sparse} for $0<\delta<1$ if there exist sets $\{E_S:\, S\in\mathcal{S}\}$ that are pairwise disjoint, $E_S\subset S$, and satisfy $|E_S|>\delta |S|$ for all $S\in\mathcal{S}$. For any $\gamma$-cube $Q$, $1\leq p<\infty$, define $\langle f\rangle_{Q,p}^p:=|Q|^{-1}\int_Q|f|^p\,\d x$. Then the $(r,s)$-sparse form $\Lambda_{\mathcal{S},r,s}$ is defined as
\[
\Lambda_{\mathcal{S},r,s}(f,g):=\sum_{S\in\mathcal{S}}|S|\langle f\rangle_{S,r}\langle g\rangle_{S,s},
\]
for $1\leq r,s<\infty$. We sometimes abbreviate this as $\Lambda_{r,s}$ when there is no need to specify the sparse collection $\mathcal{S}$.
\end{definition}

One of the main contributions of the present work is to establish a link between the geometry of the curve and the extent to which the associated singular integral operator can be sparse dominated, described by the range of the pairs $(r,s)$ where a sparse form $\Lambda_{r,s}$ domination is available. Our theorems are sharp up to the endpoint. That is, we are able to obtain a region in the $(1/r, 1/s)$ plane such that a sparse domination result holds for pairs $(r, s)$ in the interior of the region and fails for pairs in the exterior of the region. We refer to Section \ref{weighted} for a proof of this fact.

The main ideas of the proof of the sparse domination theorems (Theorem \ref{main} and \ref{main2} below) are inspired by the argument of Lacey in \cite{Lacey2}. The proof involves several main ingredients including $L^p$ improving estimates for the single scale averaging operator associated with curves with nonvanishing torsion, Fourier decay estimates for the arclength measure along such curves which allows one to obtain certain translation-continuity $L^p$-improving estimates for each single scale averaging operator, and uniform $L^p$ bounds for truncations of the full (multi-scale) operator.

\subsection{Monomial curves}

We begin by considering monomial curves. Let $\gamma: \mathbb{R}\to \mathbb{R}^n$ be a monomial curve, that is, 
\[
\gamma(t)=\begin{cases} (\epsilon_1|t|^{\alpha_1},\ldots,\epsilon_n|t|^{\alpha_n}) & \text{if}\,t\geq 0 \\
(\epsilon'_1 |t|^{\alpha_1},\ldots,\epsilon'_n|t|^{\alpha_n}) & \text{if}\,t<0
\end{cases}
\] for real numbers $0<\alpha_1<\cdots<\alpha_n<\infty$ where
\[
\vec{\epsilon}, \vec{\epsilon}'\in \{1,-1\}^n
\]and there exists $j$ so that $\epsilon_j\neq \epsilon'_j$. Define the Hilbert transform along $\gamma$ as
\[
H_{\gamma}f(x):=p.v.\int_{\mathbb{R}}f(x-\gamma(t))\frac{\d t}{t},\quad x\in\mathbb{R}^n.
\]It is obtained in \cite{SW} that $H_\gamma$ together with its maximal truncation are bounded on $L^p(\mathbb{R}^n)$, $1<p<\infty$.

For $\lambda>0$, We may define single scale operators $A_\lambda^{\gamma}$ by
\begin{equation}\label{average}
A^{\gamma}_{\lambda} f(x):=\int_{\frac{\lambda}{2}\leq |t|<\lambda} f(x-\gamma(t))\,\frac{\d t}{t},
\end{equation}so that in the distributional sense, we have
\[H_\gamma f=\sum_{k\in\mathbb{Z}}A^{\gamma}_{2^k}f.\]
In other words, $H_\gamma$ can be decomposed into a sum of single scale operators at different scales. There is a natural family of nonisotropic dilations $\left\{\delta^{\gamma}_{\lambda}\right\}_{\lambda>0}$ associated with $\gamma$, given by
\[\delta^\gamma_\lambda(x):=(\lambda^{\alpha_1}x_1,\ldots,\lambda^{\alpha_n}x_n).
\]
Observe that for any $\lambda>0$ we have the relation
\[
A^\gamma_{\lambda}f(x)=A^\gamma_1(f\circ\delta^\gamma_\lambda(\cdot))(\delta^\gamma_{\lambda^{-1}}(x)).
\]

We will work with generalized cubes that reflect the geometry of the nonisotropic dilation group associated with $\gamma$.
\begin{definition}\label{gcubedef}
A \emph{$\gamma$-cube} is a hyperrectangle in $\mathbb{R}^n$ with sides parallel to the coordinate axes whose side-lengths $(\ell_1,\ldots,\ell_n)$ satisfy the relation $\ell_1^{1/\alpha_1}=\cdots=\ell_n^{1/\alpha_n}$. We denote the collection of all $\gamma$-cubes by $\mathcal{Q}^{\gamma}$. 
\end{definition}
For $Q\in\mathcal{Q}^{\gamma}$, we write $$\vec\ell(Q)=(\ell_1(Q),\ldots,\ell_n(Q)).$$ For a fixed $\gamma$, any $\ell_j(Q)$ dictates all of the rest of the side-lengths. For our convenience, we introduce another notation $\ell(Q)$ defined as
\[
\ell(Q):=\ell_1(Q)^{1/\alpha_1}=\cdots=\ell_n(Q)^{1/\alpha_n}.
\]In particular, $|Q|=\ell(Q)^{\sum_{j=1}^n\alpha_j}$.
\begin{remark}
Equivalently, up to translation, $\mathcal{Q}^{\gamma}$ is the orbit of $[0,1]^n$ under the dilation group $\delta^\gamma$. For example, when $\gamma(t)=(t,t^2)$ is the parabola in $\mathbb{R}^2$, $\mathcal{Q}^{\gamma}$ is the collection of all parabolic cubes $Q$ whose side-lengths are of the form $(\ell, \ell^2)$. \end{remark}

Now we are ready to state our first theorem.

\begin{theorem}\label{main}
Given a monomial curve $\gamma$ in $\mathbb{R}^n$, $n\geq 2$, and $(r, s)$ so that $(1/r,1/s')$ lies in the interior of the trapezoid $\Omega$ with vertices
\begin{align*}
(0, 0),~ (1, 1), ~\bigg(\frac{n^2-n+2}{n^2+n}, \frac{n-1}{n+1}\bigg),~\bigg(\frac{2}{n+1}, \frac{2n-2}{n^2+n}\bigg).
\end{align*}
For any compactly supported bounded functions $f,g$ on $\mathbb{R}^n$, and any $\delta=\delta(\gamma)>0$ there exists a $\delta$-sparse collection $\mathcal{S}$ of $\gamma$-cubes such that
\[
|\langle H_\gamma f,g\rangle|\leq C(n,\gamma)\Lambda_{\mathcal{S},r,s}(f,g).
\]
\end{theorem}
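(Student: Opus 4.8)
The plan is to follow the by-now-standard stopping-time scheme for sparse domination of singular integrals of Radon type, as pioneered by Lacey in \cite{Lacey2}, but adapted to the nonisotropic dilation structure attached to $\gamma$. We decompose $H_\gamma=\sum_{k\in\Z}A^\gamma_{2^k}$ and, on a fixed $\gamma$-cube $Q_0$ (which we may assume, by a limiting argument, contains the supports of $f$ and $g$ after translation and scaling), we run a recursive construction. Given a $\gamma$-cube $S$ from the current generation, we set aside the ``local'' part $\sum_{k\,:\,2^k\lesssim \ell(S)} A^\gamma_{2^k}(f\ind_{3S})$ paired against $g\ind_S$, which we will bound by $C\,|S|\langle f\rangle_{S,r}\langle g\rangle_{S,s}$, and we select the next generation of stopping $\gamma$-cubes as the maximal subcubes $S'\subset S$ on which either $\langle f\ind_{3S}\rangle_{S',r}$ or $\langle g\rangle_{S',s}$ exceeds a large constant multiple of the corresponding average over $S$. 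Choosing the constants large enough forces the stopping cubes to have total measure at most $\frac12|S|$, which yields sparseness with the stated $\delta=\delta(\gamma)$; the nonisotropic dyadic structure needed for maximal subcube selection is provided by the $\gamma$-cubes of Definition \ref{gcubedef}, which tile under $\delta^\gamma$. The ``error'' part, namely the contribution of scales $2^k$ comparable to or larger than $\ell(S)$ acting on $f$ restricted to $S\setminus 3S'$ across the stopping children, is handled by the continuity/telescoping estimates built from the Fourier decay of the arclength measure, so that it too is absorbed into the sparse sum.

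The two analytic inputs that make the above recursion close are the single-scale estimates, which I would isolate as follows. First, the single scale averaging operator $A^\gamma_1$ (and its truncated variants) is $L^p\to L^{q}$ bounded precisely for $(1/p,1/q)$ in a certain closed region; by nonisotropic scaling $A^\gamma_\lambda$ obeys the same bounds with the dilation-invariant normalization, so that $\sum_{2^k\lesssim \ell(S)}A^\gamma_{2^k}(f\ind_{3S})$ restricted to $S$ obeys an $L^r(S)\to L^{s'}(S)$-type bound with the correct power of $|S|$ — this is exactly what produces the factor $|S|\langle f\rangle_{S,r}\langle g\rangle_{S,s}$ after one application of H\"older. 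The vertices of the trapezoid $\Omega$ are the images, under the duality $s\mapsto s'$, of the corners of this $L^p$-improving region for curves with nonvanishing torsion (the monomial model), which by the usual interpolation/restricted-type arguments has corners dictated by $(0,0)$, $(1,1)$, and the two ``off-diagonal'' points governed by the $n$-dimensional geometry; this explains the denominators $n^2+n$ and $n^2-n+2$. Second, to sum the scales and to pass from $f\ind_{3S}$ to $f\ind_{3S\setminus\bigcup 3S'}$ one needs a quantitative smoothing: the estimate $\|A^\gamma_\lambda f - \tau_h A^\gamma_\lambda f\|_{s'}\lesssim (|h|/\lambda^{\text{(nonis.)}})^\theta \|f\|_r$ for $(1/r,1/s')$ in the \emph{interior} of $\Omega$, obtained by interpolating the trivial $L^1$ continuity against the $L^2$ decay coming from $|\widehat{\d\sigma}(\xi)|\lesssim (1+|\xi|)^{-1/n}$-type bounds along $\gamma$ — this $\theta>0$ is the gain that converts the geometric series in $k$ into a convergent one, and it is the reason the theorem holds on the open trapezoid and not its closure.

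The main obstacle I expect is precisely the bookkeeping at the boundary of $\Omega$: one has to verify that for every $(1/r,1/s')$ in the interior there is a single exponent gain $\theta>0$ and a single constant uniform over all scales and all stopping cubes, so that the error terms genuinely telescope into a sparse form rather than a sparse form times a divergent constant. A secondary technical point is the uniform $L^p$ bound for arbitrary truncations $\sum_{j\le k\le J}A^\gamma_{2^k}$ of $H_\gamma$ (needed to control the ``tail'' cleanly and to justify the passage to compactly supported $f,g$ and the limiting argument on $Q_0$); for monomial curves this should follow from the methods of \cite{SW} together with the nonisotropic Littlewood–Paley theory adapted to $\delta^\gamma$, but it must be stated with constants independent of the truncation parameters. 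Once these uniformities are in hand, the recursion produces the $\delta$-sparse collection $\mathcal S$ and the bound $|\langle H_\gamma f,g\rangle|\le C(n,\gamma)\,\Lambda_{\mathcal S,r,s}(f,g)$, completing the proof.
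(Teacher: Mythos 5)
Your overall scheme is the right one — decompose $H_\gamma=\sum_k A^\gamma_{2^k}$, localize to dyadic $\gamma$-cubes, run a Lacey-style stopping-time recursion driven by the averages of $f$ and $g$, and close the recursion using (i) $L^p$-improving bounds for the single-scale operator, (ii) a Fourier-decay continuity/smoothing estimate, and (iii) uniform $L^p$ bounds for truncations of $H_\gamma$. Those are exactly the three analytic inputs in the paper, and the stopping-time construction you sketch is essentially the paper's.

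However, there is a genuine gap in how you propose to control the local block $\left\langle\sum_{2^k\lesssim\ell(S)}A^\gamma_{2^k}(f\ind_{3S}),\,g\ind_S\right\rangle$. You claim that summing the nonisotropically rescaled $L^r\to L^{s'}$-improving bounds gives an $L^r(S)\to L^{s'}(S)$ bound with the correct power of $|S|$. This fails: for $\lambda\sim 2^k$ one has $\|A^\gamma_\lambda\|_{L^r\to L^{s'}}\sim \lambda^{(1/s'-1/r)\sum_j\alpha_j}$, which has a \emph{negative} exponent when $s'>r$, so the geometric series over the small scales $2^k\ll\ell(S)$ diverges. A truncated Hilbert transform is a singular integral, not a smoothing operator, and does not inherit the improving bound of a single scale. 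What actually makes the block summable in the paper is a Calder\'on--Zygmund decomposition $f_i=g_i+b_i$ at level $\sim\langle f_i\rangle_{Q_0}$ inside each stopping cube: the three terms with at least one good function are bounded using the uniform $L^p$ (not $L^p\to L^q$!) bound of the truncations of $H_\gamma$ together with $\|g_i\|_\infty\lesssim 1$ and the disjoint supports of the $b_{i,L}$; only the bad--bad term $\langle\sum_Q A_Q b_1,b_2\rangle$ uses the continuity estimate, where the mean-zero property of $b_{2,q-k_2}$ over its stopping cube $L$ lets one replace $A_Q b_{1,q-k_1}$ by a difference $A_Q b_{1,q-k_1}-\tau_y A_Q b_{1,q-k_1}$ and extract the geometric gain $2^{-\alpha\eta(k_1+k_2)/2}$ in the scale separation. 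Your proposal omits the CZ split entirely, and without it the local block cannot be bounded by $C|S|\langle f\rangle_{S,r}\langle g\rangle_{S,s}$ in the way you describe. (A secondary, nontrivial technical point you elide is the interaction between dyadic $\gamma$-cubes and honest $\gamma$-cubes: when $\alpha_j$ is irrational, dyadic $\gamma$-cubes are not $\gamma$-cubes, and one must pass back and forth using the comparability $\ell(Q)<2\ell(\widetilde Q)$, both when applying the continuity lemma and when outputting the final sparse collection.)
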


The \emph{sparse region} for admissible pairs $(r,s)$ in the above theorem is the trapezoid $\Omega'$ with vertices
\begin{align*}
(0, 1),~ (1, 0), ~\bigg(\frac{n^2-n+2}{n^2+n}, \frac{2}{n+1}\bigg),~\bigg(\frac{2}{n+1}, \frac{n^2-n+2}{n^2+n}\bigg),
\end{align*}which is symmetric about $1/s=1/r$.

\begin{remark}The sparse collection $\mathcal{S}$ obtained in Theorem \ref{main} can be constructed to contain only dyadic versions of $\gamma$-cubes. However, some care is needed with the definition of dyadic $\gamma$-cubes. If a given curve $\gamma$ is defined via some irrational $\alpha_j$, even when $\ell(Q)=2^k$, $k\in\mathbb{Z}$, the $\gamma$-cube $Q$ will not necessarily have dyadic side-lengths. Thus we define $Q$ to be a \emph{dyadic $\gamma$-cube} if its side-lengths are of the form $(2^{\lfloor k\alpha_1\rfloor}, \ldots, 2^{\lfloor k\alpha_n\rfloor})$ for some $k\in\mathbb{Z}$, and denote $\ell(Q):=2^k$ for the largest possible $k$. Note that a dyadic $\gamma$-cube may not be a $\gamma$-cube, however, they are essentially equivalent. More precisely, given any $\gamma$-cube $Q$, there exists a dyadic $\gamma$-cube $\widetilde{Q}\subset Q$ such that $\ell(Q)<2\ell(\widetilde{Q})$, and vice versa. In fact, we will first prove that Theorem \ref{main} holds true for sparse forms defined via dyadic $\gamma$-cubes, then pass to the desired claim.
\end{remark}

\subsection{Curves with nonvanishing torsion}
Let $\gamma: [-1, 1]\to\mathbb{R}^n$ be a $C^n$ curve with \emph{non-vanishing torsion}; i.e. satisfying for all $s\in (-1, 1)$,
$$\det\begin{pmatrix} \gamma^{(1)}(s) & \cdots &\gamma^{(n)}(s)\end{pmatrix}\ne 0.$$
For curves with nonvanishing torsion, one must consider a local operator (i.e., restrict the integration in $t$ in the definition of $H_{\gamma}$ to the interval $[-1, 1]$). This is necessary to ensure that one may indeed associate natural nonisotropic cubes with the operator associated with a family of balls belonging to a space of homogeneous type. While the curve itself lacks a true nonisotropic dilation structure, it is very nearly endowed with such structure, which allows one to define the appropriate nonisotropic cubes. Thus we define the Hilbert transform along $\gamma$ as
\begin{equation}\label{Hnvt}
H_{\gamma}f(x):=p.v.\int_{-1}^1f(x-\gamma(t))\frac{\,dt}{t}.\end{equation}
Although $\gamma$ is not generated by a family of nonistropic dilations, we may still associate a family of cubes to $\gamma$, dictated by the Taylor expansion of $\gamma$ at the origin. 

To begin with, we assume $\gamma(0)=0$ without loss of generality. At each $0<|t|<1$, one has the Taylor expansion in each component, given by
\begin{equation}\label{Taylor}
\gamma(t)=\gamma'(0)t+\frac{\gamma^{(2)}(0)}{2}t^2+\cdots +\frac{\gamma^{(n)}(0)}{n!}t^n+O(t^{n+1}).
\end{equation}The nonvanishing torsion condition then guarantees that the vectors $\gamma'(0), \gamma''(0),\ldots\gamma^{(n)}(0)$ are linearly independent, i.e. they span the vector space $\mathbb{R}^n$. Therefore, one can accordingly define a new coordinate system such that $\gamma$ lies inside some axes-parallel cubes at each scale, similarly as the monomial curve case.

More precisely, let $e_1$ be the unit vector parallel to $\gamma'(0)$, and for each $1<k\leq n$, let $e_k$ be the unit vector parallel to $\gamma^{(k)}(0)-\text{Proj}_{\text{span}(e_1, \ldots, e_{k-1})}\gamma^{(k)}(0)$. Then $\{e_1,\ldots,e_n\}$ generates a coordinate system which is a rotation of the standard coordinate system. 

\begin{definition}\label{cubenvt}
For such a $\gamma$ with non-vanishing torsion, we define a \textit{$\gamma$-cube} $Q$ of side-length $\ell$ (denoted by $\ell(Q)$) to be a hyperrectangle in $\mathbb{R}^n$ with sides parallel to the vectors $e_1, e_2, \ldots, e_n$ so that the side-length of the side parallel to $e_k$ is $\ell^k$, $\forall 1\leq k\leq n$.
\end{definition}

\begin{remark}
Since we are considering a local operator, the remainder term $O(t^{n+1})$ is always negligible, which is why the curve $\gamma$ can be well approximated by the cubes defined above. 
\end{remark}

Observe that the $\gamma$-cubes defined above can be viewed as rotations of the cubes studied in the previous subsection associated to the moment curve $(t,t^2,\ldots,t^n)$, and their orientation (at all scales) is determined by the derivatives of $\gamma$ at $0$ up to order $n$. 

\begin{theorem}\label{main2}
Given a $C^n$ curve $\gamma: [-1, 1]\to\mathbb{R}^n$ with nonvanishing torsion, $n\geq 2$. Let $(r, s)$ be so that $(1/r,1/s')$ lies in the interior of the trapezoid $\Omega$ as defined in the statement of Theorem \ref{main}, for any compactly supported bounded functions $f,g$ on $\mathbb{R}^n$, there exists a sparse collection $\mathcal{S}$ of $\gamma$-cubes such that
\[
|\langle H_\gamma f,g\rangle|\leq C(n,\gamma)\Lambda_{\mathcal{S},r,s}(f,g).
\]
\end{theorem}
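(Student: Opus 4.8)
The plan is to reduce Theorem \ref{main2} to Theorem \ref{main} by exploiting the fact that a $C^n$ curve with nonvanishing torsion is, in the chosen coordinate system $\{e_1,\dots,e_n\}$, a perturbation of the moment curve $\gamma_0(t)=(t,t^2/2,\dots,t^n/n!)$ at every relevant scale. After translating so $\gamma(0)=0$ and rotating so the osculating flag of $\gamma$ at $0$ is aligned with the standard flag, the Taylor expansion \eqref{Taylor} shows that on the dyadic block $|t|\sim 2^{k}$ (with $k\le 0$, since we work locally) the $j$-th component of $\gamma$ is $c_j t^j(1+O(t))$ with $c_j\ne 0$, so rescaling by the nonisotropic dilation $x_j\mapsto 2^{-jk}x_j$ maps this piece of $\gamma$ into a fixed bounded neighborhood of $\gamma_0$ restricted to $|t|\sim 1$, uniformly in $k$. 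The reason this is only ``very nearly'' a dilation structure, as the text notes, is the $O(t)$ error; but since the operator is truncated to $|t|\le 1$ this error is always a small, $C^{n}$-controlled perturbation and does not affect the geometry of the $\gamma$-cubes of Definition \ref{cubenvt}, which are precisely rotated images of moment-curve cubes.

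The key steps, in order, are: (i) record the single-scale $L^p$-improving estimates for $A_1^{\gamma}$ — i.e. $A_1^\gamma:L^p\to L^q$ for $(1/p,1/q)$ in the relevant region — which for curves with nonvanishing torsion follow from the classical results of Christ, Littman, and others, and crucially hold \emph{uniformly} over the perturbation class because they depend only on lower bounds for the torsion determinant and $C^n$ bounds on $\gamma$; (ii) record the Fourier decay $|\widehat{d\sigma}(\xi)|\lesssim |\xi|^{-1/n}$ for the arclength measure on a nonvanishing-torsion arc (van der Corput / stationary phase), again uniform over the class, and deduce the translation-continuity $L^p$-improving estimate for each single-scale piece exactly as in the monomial case; (iii) verify the uniform $L^p$ bounds for truncations $\sum_{k_1\le k\le k_2}A_{2^k}^\gamma$ of the full operator, which for nonvanishing-torsion curves is the content of the work on Hilbert transforms along such curves (Stein–Wainger type theory); and (iv) feed (i)–(iii) into the Lacey-type stopping-time/recursion argument already used to prove Theorem \ref{main}, now carried out with the $\gamma$-cubes of Definition \ref{cubenvt}. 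Because steps (i)–(iii) are the \emph{only} inputs that argument uses about the curve, and each holds verbatim (with uniform constants) for nonvanishing-torsion curves, the proof of Theorem \ref{main} transfers with only cosmetic changes: the nonisotropic dilation $\delta^\gamma_\lambda$ is replaced by the genuine dilation of the model moment curve together with a uniformly bounded perturbation, and the bookkeeping of cube side-lengths $(\ell,\ell^2,\dots,\ell^n)$ is identical.

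The main obstacle is making precise the sense in which the single-scale operators $A_{2^k}^\gamma$ for a nonvanishing-torsion curve are ``the same'' as those for the moment curve, uniformly in $k$, despite the absence of an exact dilation symmetry. Concretely, one must show that after the rescaling described above, the rescaled curve pieces $\gamma_k(t):= \delta^\gamma_{2^{-k}}(\gamma(2^k t))$ for $|t|\sim 1$ lie in a compact family of $C^n$ curves with torsion bounded below, so that the $L^p$-improving and Fourier-decay constants can be taken independent of $k$. This requires controlling how the $O(t^{n+1})$ tail in \eqref{Taylor} behaves under rescaling — it contributes a term that is $O(2^k)$ in the rescaled coordinates and hence harmlessly small as $k\to-\infty$ — and checking that the finitely many ``large'' scales $k$ near $0$ are handled by the same compactness. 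Once this uniform-family statement is in hand, the remainder of the proof is a routine adaptation of the argument for Theorem \ref{main}, and one also obtains, as in the monomial case, that $\mathcal{S}$ may be taken to consist of dyadic $\gamma$-cubes.
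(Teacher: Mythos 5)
Your proposal is correct and follows the same overall strategy as the paper: reduce to the stopping-time/recursion machinery of Theorem \ref{main}, with the only genuinely new ingredient being the single-scale continuity $L^p$-improving estimate in the nonvanishing-torsion setting (the paper's Lemma \ref{cty2}), established by rescaling the curve at scale $\lambda$ back to scale $1$ and verifying that the rescaled arc has uniformly nonvanishing torsion. Where you differ is in \emph{how} that uniform torsion bound is obtained. You rescale by the nonisotropic dilation $\delta_{\lambda}$ in the single coordinate system anchored at $t=0$ and argue by $C^n$ compactness: since the $j$-th component in this system satisfies $\gamma_j^{(\ell)}(0)=0$ for $\ell<j$, the rescaled curve $\nu_j(t)=\lambda^{-j}\gamma_j(\lambda t)$ converges in $C^n$ on $|t|\sim1$ to a moment-type curve $(c_1t,\ldots,c_nt^n)$ with $c_j\neq0$, so the torsion is bounded below for all small $\lambda$, and the single-scale $L^p$-improving and van der Corput constants are uniform over this compact family. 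The paper instead cuts the annulus $\{|t|\sim\lambda\}$ into $M_2$ short subintervals $I_i$, builds a fresh Gram--Schmidt coordinate system at the left endpoint $a_i$ of each, and controls the determinant of the rescaled derivative matrix by a term-by-term expansion (the $\lambda$-powers cancel in each permutation term, and the lower-triangular factors $\gamma_k^{(\ell)}(\lambda t)$, $\ell<k$, are small on $I_i$ because they vanish at $a_i$). Your single-coordinate-system version is slightly more streamlined and avoids the subdivision into $M_2$ pieces, at the cost of invoking compactness/continuity of the $L^p$-improving and oscillatory-integral constants in the $C^n$ topology; the paper's version is more explicit and self-contained about why the rescaled torsion does not degenerate. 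Both are valid, and the rest of the argument (decomposition into $A_Q$ pieces, shifted dyadic grids in the rotated frame, Calder\'on--Zygmund stopping cubes, treatment of the bad--bad term, passing from dyadic to genuine $\gamma$-cubes) transfers verbatim as you claim. One small imprecision worth noting: since $\gamma$ is only assumed $C^n$, the Taylor remainder in \eqref{Taylor} is $o(t^n)$ rather than $O(t^{n+1})$; this does not affect the argument, as $o(t^n)$ still rescales to an $o(1)$ perturbation in $C^n$ norm, but you should not assert the stronger $O(t^{n+1})$ rate without an extra derivative.
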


In fact, one can construct the sparse collection $\mathcal{S}$ in Theorem \ref{main2} so that $\forall Q\in\mathcal{S}$, there holds $\ell(Q)\lesssim 1$: see Remark \ref{small cube} of Section \ref{nonvanishing} for a more detailed discussion.

In the rest of the article, we prove Theorem \ref{main} and Theorem \ref{main2} in Section \ref{mono} and \ref{nonvanishing} respectively. The sharp weighted norm inequalities implied by the sparse domination theorems will be discussed in Section \ref{weighted}, where a proof of the sharpness of the sparse region $\Omega$ is also presented.

\section{Monomial curves: proof of Theorem \ref{main}}\label{mono}
Given $y\in\mathbb{R}^n$, let $\tau_yf(x):=f(x-y)$ be the translation operator by $y$. A key ingredient of our argument is the following $L^p$ improving continuity estimate for the single scale operators.
\begin{lemma}\label{cty1}
Let $(r, s)$ be so that $(1/r,1/s)$ lies in the interior of the region $\Omega$, where $\Omega$ is the trapezoid as defined in the statement of Theorem \ref{main}. Then for any $y\in\mathbb{R}^n$ satisfying $|y_j|\leq 1, \forall j=1,\ldots,n$,
\[
\|A_1^\gamma-\tau_y A_1^\gamma: L^r\to L^s\|\lesssim |y|^\eta
\]for some $\eta=\eta(r,s,n)>0$.
\end{lemma}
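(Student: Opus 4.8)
The plan is to decompose $A_1^\gamma$ into dyadic pieces in the $t$-variable and exploit two complementary facts: the $L^p$-improving estimates for each (smooth, non-singular) piece, and the Fourier-decay of the arclength measure along $\gamma$, which upgrades an $L^p$-improving bound to a translation-continuity bound with a quantitative power gain. Write $A_1^\gamma f = \sum_{j\geq 0} A^{(j)} f$, where $A^{(j)}f(x) = \int_{2^{-j-1}\leq |t| < 2^{-j}} f(x-\gamma(t))\,\frac{\d t}{t}$. Each $A^{(j)}$ is, after the nonisotropic rescaling $x\mapsto \delta^\gamma_{2^j}(x)$, a constant-multiple of a single-scale averaging operator against a smooth compactly supported density on the dilate of $\gamma$; its kernel is a measure supported on a $2^{-j}$-piece of the curve. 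Since $\int_{2^{-j-1}\le|t|<2^{-j}}\frac{\d t}{t}=\log 2$ is $O(1)$, the $\sum_j$ converges geometrically once we have a power gain in $j$ in the relevant operator norm, so the whole lemma reduces to a uniform-in-$j$ estimate with a factor $2^{-\epsilon j}$ to spare, together with an estimate that produces the $|y|^\eta$.

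First I would record the two single-scale inputs. (i) \emph{$L^r\to L^s$ improving for the full single-scale operator:} by the standard rescaling identity $A^\gamma_\lambda f = A_1^\gamma(f\circ\delta^\gamma_\lambda)\circ\delta^\gamma_{\lambda^{-1}}$ and the known $L^p$-improving theory for the (non-isotropic) averaging operator along a monomial curve — valid precisely for $(1/r,1/s)$ in the closed trapezoid $\Omega$, sharp at the vertices — one gets $\|A_1^\gamma:L^r\to L^s\|\lesssim 1$ on $\Omega$, hence also a uniform bound for each $A^{(j)}$ after unwinding the dyadic piece. (ii) \emph{Fourier decay:} the arclength (or rather $\frac{\d t}{t}$-weighted) measure $\d\mu$ carried by a unit piece of $\gamma$ satisfies $|\widehat{\d\mu}(\xi)|\lesssim (1+|\xi|)^{-1/n}$ by the van der Corput / Arkhipov–Chubarikov–Karatsuba type bounds for curves with non-vanishing torsion (the monomial curve has this after the natural anisotropic reading of $\xi$). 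This gives $\|A_1^\gamma:L^2\to L^2\|$-type control on the \emph{difference}: the multiplier of $A_1^\gamma-\tau_y A_1^\gamma$ is $\widehat{\d\mu}(\xi)(1-e^{2\pi i y\cdot\xi})$, and one interpolates the trivial bound $|1-e^{2\pi iy\cdot\xi}|\leq 2$ against $|1-e^{2\pi iy\cdot\xi}|\lesssim |y||\xi|$ to get, for any $0<\theta<1$, an $L^2\to L^2$ bound of size $|y|^{\theta}$ at the cost of losing $|\xi|^{\theta}$, which is still summable against $|\xi|^{-1/n}$ provided $\theta<1/n$; on the dyadic pieces this produces the needed $2^{-\epsilon j}$ gain as well.

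The core of the argument is then an interpolation between these two inputs. Fix $(1/r,1/s)$ in the interior of $\Omega$. I would first prove the difference estimate $\|A^{(j)}-\tau_y A^{(j)}:L^{r_0}\to L^{s_0}\|\lesssim 2^{-\epsilon j}|y|^{\eta_0}$ at (or near) the $L^2$-based point using (ii) after the dyadic rescaling (the rescaling converts $|y|$ into $|\delta^\gamma_{2^{j}}y|$-type quantities, which one controls using $|y_k|\le 1$ and $\alpha_k>0$ — here a little care is needed with which components of $y$ get amplified vs. contracted under $\delta^\gamma_{2^{\pm j}}$, and this bookkeeping is where I expect the only real friction). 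For the "no-gain-in-$y$ but improving" endpoints one simply uses (i): $\|A^{(j)}-\tau_yA^{(j)}:L^r\to L^s\|\lesssim \|A^{(j)}:L^r\to L^s\|\lesssim 1$ for $(1/r,1/s)$ a vertex of $\Omega$ (or an interior point), with no decay. Real/complex interpolation between a point of type (ii) — which carries both a $2^{-\epsilon j}$ and a $|y|^{\eta_0}$ — and the surrounding $\Omega$-points of type (i) yields, for every interior $(1/r,1/s)$ of $\Omega$, a bound $\|A^{(j)}-\tau_yA^{(j)}:L^r\to L^s\|\lesssim 2^{-\epsilon' j}|y|^{\eta}$ with $\epsilon',\eta>0$ depending on how far inside $\Omega$ the point sits; summing over $j\geq 0$ finishes the proof. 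The main obstacle, as indicated, is not conceptual but is the anisotropic translation bookkeeping: verifying that the power of $|y|$ obtained after rescaling each dyadic block, combined over all scales, still comes out as a genuine positive power $|y|^\eta$ uniformly in $y$ with $|y_j|\le 1$; the non-vanishing-torsion / monomial structure (all $\alpha_j>0$ distinct) is exactly what makes this go through.
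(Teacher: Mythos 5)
The two inputs you identify — the $L^r\to L^s$ improving bound for the single-scale operator coming from Christ and Tao--Wright, and the van der Corput/Plancherel estimate giving $\|A_1^\gamma-\tau_y A_1^\gamma\|_{L^2\to L^2}\lesssim |y|^{\eta_0}$ — together with interpolation between them, are exactly the paper's proof. However, you wrap these in an extra dyadic decomposition $A_1^\gamma = \sum_{j\ge 0}A^{(j)}$ with $A^{(j)}$ supported on $2^{-j-1}\le |t| < 2^{-j}$, apparently under the impression that $A_1^\gamma$ integrates over $0<|t|<1$ and carries a singularity at $t=0$ that must be resolved. That is a misreading of \eqref{average}: by definition $A_1^\gamma$ averages only over $\tfrac{1}{2}\le |t|<1$, where the monomial curve already has nonvanishing torsion and $\d t/t$ is a smooth bounded density, so both the $L^p$-improving bound and the Fourier decay apply to $A_1^\gamma$ directly with no summation in $j$ required.

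Moreover, the ``friction'' you flag in the rescaling bookkeeping is in fact a genuine obstruction to the scheme you set up, not a cosmetic one: rescaling the $j$-th piece to unit scale sends $y$ to $\delta^\gamma_{2^j}(y)=(2^{j\alpha_1}y_1,\ldots,2^{j\alpha_n}y_n)$, which \emph{grows} in every component as $j\to\infty$ (all $\alpha_k>0$), and the small-$t$ pieces of the curve produce no compensating gain in operator norm --- each $A^{(j)}$ has $\|A^{(j)}\|_{L^2\to L^2}\approx\log 2$ uniformly, and its Fourier transform only decays once $|\delta^\gamma_{2^{-j}}\xi|$ is large, which leaves a $j$-dependent gap where neither $|1-e^{iy\cdot\xi}|\lesssim |y||\xi|$ nor the decay bound is small. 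So the proposed $\sum_j$ would not converge with a uniform $|y|^\eta$ gain. Once the definition of $A_1^\gamma$ is read correctly, this entire superstructure evaporates and your argument collapses to the paper's one-step interpolation.
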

\begin{proof}
For any $(r,s)$ in the claimed range, $A_1^{\gamma}$ is dominated by an averaging operator corresponding to a curve with nonvanishing torsion, since the condition $0<\alpha_1<\cdots<\alpha_n$ guarantees that $\gamma$ has nonvanishing torsion away from $t=0$. For $(r, s)$ in the range described above, the following non-endpoint $L^r\to L^s$ improving estimate is proved in \cite{Christ1}, \cite{TW}:
\[
\|A_1^\gamma:\,L^r\to L^s\|\lesssim 1.
\]By an easy application of van der Corput's lemma, one may show that if $\mu:=\psi(t)\nu(t)\,dt$ for $\psi$ a smooth normalized bump and $\nu$ a curve with nonvanishing torsion on the support of $\psi$, then $|\widehat{\mu}(|\xi|)|\lesssim (1+|\xi|)^{-1/n}$. Thus by Plancherel, one also has
\[
\|A_1^\gamma-\tau_y A_1^\gamma: L^2\to L^2\|\lesssim\|(1-e^{i y\cdot \xi})\widehat{d\sigma_\gamma}(\xi)\|_{\infty}\lesssim |y|^{\eta_0},\quad \eta_0=\eta_0(n)>0.
\]The desired estimate in the entire region of $(r,s)$ then follows from interpolating between the two estimates above.
\end{proof}
By change of variables, this implies immediately a scale invariant version:
\[
\|A_\lambda^\gamma-\tau_yA_{\lambda}^\gamma:\, L^r\to L^s\|\lesssim \left|\left(\frac{y_1}{\lambda^{\alpha_1}},\ldots,\frac{y_n}{\lambda^{\alpha_n}}\right)\right|^\eta\lambda^{(1/s-1/r)\sum_{j=1}^n\alpha_j}
\]whenever $|y_j|\leq \lambda^{\alpha_j}$, $\forall j=1,\ldots,n$. This then implies the following lemma.
\begin{lemma}\label{ctyt}
Let $f_1,f_2$ be supported on a $\gamma$-cube $Q$ and let $\lambda\sim \ell(Q)$. For any $y\in\mathbb{R}^n$ such that $|y_j|\leq\ell(Q)^{\alpha_j}$, $\forall j=1,\ldots,n$, and $(r,s)$ be such that $(1/r, 1/s')$ lies in the interior of $\Omega$ as in the statement of Theorem \ref{main}, there holds
\[
|\langle A_{\lambda}^\gamma f_1-\tau_y A_{\lambda}^\gamma f_1,f_2\rangle|\lesssim \left|\left(\frac{y_1}{\ell(Q)^{\alpha_1}},\ldots,\frac{y_n}{\ell(Q)^{\alpha_n}}\right)\right|^\eta |Q|\langle f_1\rangle_{Q,r}\langle f_2\rangle_{Q,s},
\]where the implicit constant depends on $r,s,\gamma$.
\end{lemma}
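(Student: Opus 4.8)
The plan is to deduce the bilinear estimate directly from the scale-invariant operator-norm bound displayed just above, together with H\"older's inequality and the normalization built into the averages $\langle\cdot\rangle_{Q,p}$. Write $T:=A_\lambda^\gamma-\tau_y A_\lambda^\gamma$. Since the hypothesis places $(1/r,1/s')$ in the interior of $\Omega$, I would invoke the scale-invariant estimate \emph{with target space $L^{s'}$} (that is, with the r\^ole of ``$s$'' there played by $s'$), which is precisely the range in which it was asserted; this yields
\[
\|T:L^r\to L^{s'}\|\lesssim \left|\left(\frac{y_1}{\lambda^{\alpha_1}},\ldots,\frac{y_n}{\lambda^{\alpha_n}}\right)\right|^\eta\lambda^{(1/s'-1/r)\sum_{j=1}^n\alpha_j},
\]
with $\eta=\eta(r,s,n)$ the exponent furnished by Lemma \ref{cty1}.

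Next, since $f_1$ and $f_2$ are supported on $Q$, applying H\"older with the conjugate pair $(s',s)$ and then the operator bound gives
\[
|\langle Tf_1,f_2\rangle|\le \|Tf_1\|_{s'}\,\|f_2\|_{s}\le \|T:L^r\to L^{s'}\|\,\|f_1\|_{r}\,\|f_2\|_{s},
\]
where $A_\lambda^\gamma f_1\in L^{s'}$ a priori by the single-scale $L^r\to L^{s'}$ bound, $\|f_1\|_r=|Q|^{1/r}\langle f_1\rangle_{Q,r}$, and $\|f_2\|_s=|Q|^{1/s}\langle f_2\rangle_{Q,s}$. Inserting the norm estimate and using $\lambda\sim\ell(Q)$ (so that $\lambda^{\alpha_j}\sim\ell(Q)^{\alpha_j}$ for each $j$, hence the anisotropic vectors are comparable, and $\lambda^{\sum\alpha_j}\sim|Q|$), the powers of $|Q|$ combine as $(1/s'-1/r)+1/r+1/s=1$, producing
\[
|\langle Tf_1,f_2\rangle|\lesssim \left|\left(\frac{y_1}{\ell(Q)^{\alpha_1}},\ldots,\frac{y_n}{\ell(Q)^{\alpha_n}}\right)\right|^\eta|Q|\,\langle f_1\rangle_{Q,r}\langle f_2\rangle_{Q,s},
\]
which is the assertion, with constants depending only on $r,s$ and on $\gamma$ (through $n$ and the exponents $\alpha_j$).

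The only point requiring a little care, and the closest thing to an obstacle, is the slight mismatch between the hypothesis $|y_j|\le\ell(Q)^{\alpha_j}$ and the constraint $|y_j|\le\lambda^{\alpha_j}$ under which the scale-invariant estimate was stated: when $\lambda\ge\ell(Q)$ it is immediate, while in general $\lambda\sim\ell(Q)$ only forces $|y_j|\le C\lambda^{\alpha_j}$ for a fixed $C$. This is harmless — one recovers the needed bound (with at most a bounded constant loss) either by rerunning the van der Corput and interpolation argument of Lemma \ref{cty1} for $|y_j|\le C$, or simply by observing that once $|(y_1/\lambda^{\alpha_1},\ldots,y_n/\lambda^{\alpha_n})|\gtrsim 1$ one has the trivial bound $\|T:L^r\to L^{s'}\|\le 2\|A_\lambda^\gamma:L^r\to L^{s'}\|\lesssim \lambda^{(1/s'-1/r)\sum_j\alpha_j}$, which is already absorbed into the right-hand side. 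Everything else is bookkeeping of exponents.
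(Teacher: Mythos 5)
Your proposal is correct and follows essentially the same route the paper has in mind: the paper itself supplies no explicit proof of Lemma \ref{ctyt}, remarking only that the scale-invariant operator bound displayed just above it (obtained from Lemma \ref{cty1} by change of variables) ``then implies the following lemma,'' and your argument — apply that bound with target exponent $s'$, invoke H\"older with the conjugate pair $(s',s)$, and use $\|f_i\|_p=|Q|^{1/p}\langle f_i\rangle_{Q,p}$ together with $|Q|\sim\lambda^{\sum_j\alpha_j}$ to verify the exponent bookkeeping $(1/s'-1/r)+1/r+1/s=1$ — is exactly the missing step, carried out correctly. Your side remark about the mild mismatch between $|y_j|\le\ell(Q)^{\alpha_j}$ and $|y_j|\le\lambda^{\alpha_j}$ when $\lambda\sim\ell(Q)$, resolved by the trivial bound when $|\tilde y|\gtrsim 1$, is a sound and worthwhile observation that the paper elides.
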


We start the proof of Theorem \ref{main} with decomposing $A_{2^k}^\gamma$ into localized pieces. For the sake of brevity, we will drop the dependence on $\gamma$ from now on. {{For a dyadic cube $Q$ with $\ell(Q)=2^q$, $q\in\mathbb{Z}$, define 
$$A_Qf=A_{2^{q-N}}(f\chi_{\frac{1}{3}Q})$$ 
for some sufficiently large constant $N=N(\gamma)$ to be determined later. Here $cQ$ denotes the cube with the same center as $Q$ such that its side-length in the $k$-th direction is $c\ell(Q)^{\alpha_k}$, $k=1,\ldots,n$. Note that $cQ$ is not necessarily a $\gamma$-cube. 

The indicator function on $\frac{1}{3}Q$ is inserted to make sure that $A_Qf$ is supported in $Q$. Indeed, for any $x\in\text{spt}\,(A_Qf)$, the distance between $x$ and the center of $Q$ in the $k$-th direction satisfies
\[
|x_k-c(Q)_k|\leq 2^{(q-N)\alpha_k}+2^{-1}3^{-1}2^{q\alpha_k}=2^{-1}2^{q\alpha_k}(2^{1-N\alpha_k}+3^{-1})<1,
\]provided that $N$ is chosen sufficiently large depending on $\{\alpha_1,\ldots,\alpha_n\}$.}}

One can define the associated $3^n$ universal shifted dyadic grids $\mathcal{D}^\gamma_{\vec{j}}$ as
\begin{equation}\label{grids}
\left\{2^{\lfloor k\alpha_1\rfloor}\left[m_1+\frac{j_1}{3},m_1+1+\frac{j_1}{3}\right]\times\cdots\times 2^{\lfloor k\alpha_n\rfloor}\left[m_n+\frac{j_n}{3},m_n+1+\frac{j_n}{3}\right],\,k\in\mathbb{Z}, \vec{m}\in\mathbb{Z}^n\right\}
\end{equation}where $\vec{j}\in\{0,1,2\}^n$. These grids behave very nicely as they are dyadic grids in a space of homogeneous type. In particular, let $\mathcal{D}$ be a generic dyadic grid of the above type, then it preserves the following key properties of a standard (isotropic) dyadic grid:
\begin{enumerate}
\item $\mathcal{D}=\bigcup_k\mathcal{D}_k$ and each $\mathcal{D}_k$ partitions $\mathbb{R}^n$;
\item (Nesting property) For $Q_1,Q_2\in\mathcal{D}$ such that $\ell(Q_1)\leq\ell(Q_2)$, either $Q_1\subset Q_2$ or $Q_1\cap Q_2=\emptyset$;
\item Given $Q\in\mathcal{D}$, let $Q^{(1)}\in\mathcal{D}$ be the smallest cube such that $Q\subsetneq Q^{(1)}$, then $\ell(Q^{(1)})\leq C\ell(Q)$ for $C=C(n,\gamma)$;
\item Differentiation property: $\forall x\in\mathbb{R}^n$, there exists a chain $\{Q_i\}\subset\mathcal{D}$ containing $x$ such that $\lim_{i\to\infty}\ell(Q_i)=0$, i.e. $\{Q_i\}$ converges to the point $x$. 
\end{enumerate}
Note that unlike the standard case, a dyadic cube $Q\in\mathcal{D}$ can belong to more than one generations $\mathcal{D}_k$ because of the way we define dyadic cubes, which will however be irrelevant to our argument. {{Similarly as in the standard case, at each fixed scale $q$, 
\[\left\{\frac{1}{3}Q:\, \ell(Q)=2^q, Q\in\mathcal{D}_{\vec{j}}^\gamma,\,\text{for some}\,\vec{j}\right\}\]forms a disjoint partition of $\mathbb{R}^n$, hence gives rise to the following decomposition:
\[
A_{2^{q-N}}f=\sum_{\vec{j}}\sum_{\substack{Q:\,Q\in\mathcal{D}_{\vec{j}}\\ \ell(Q)=2^{q}}} A_Qf,
\]which leads to the decomposition
\[
H_\gamma= \sum_{\vec{j}}\sum_{Q\in\mathcal{D}_{\vec{j}}}A_Qf.
\]}}Since there are only finitely many dyadic grids in the above formula and our estimate doesn't depend on the specific grid, it suffices to study the operator $\sum_{Q\in\mathcal{D}}A_Qf$ for a fixed dyadic grid $\mathcal{D}$.

We claim that Theorem \ref{main} follows from iterating the following lemma.
\begin{lemma}\label{iteration}
Let $(r,s)$ be as in the statement of Theorem \ref{main}, and $C_0>1$ be a large enough constant. Fix a dyadic $\gamma$-cube $Q_0$. Let $\mathcal{Q}$ be a collection of sub dyadic $\gamma$-cubes of $Q_0$ so that
\begin{equation}\label{lemcond}
\sup_{Q\in\mathcal{Q}}\sup_{Q':\,Q\subset Q'\subset Q_0}\left|\frac{\langle f_1\rangle_{Q',r}}{\langle f_1\rangle_{Q_0,r}}+\frac{\langle f_2\rangle_{Q',s}}{\langle f_2\rangle_{Q_0,s}}\right|<C_0.
\end{equation}Then there holds
\[
\left\langle \sum_{Q\in\mathcal{Q}}A_Qf_1,f_2\right\rangle\lesssim |Q_0|\langle f_1\rangle_{Q_0,r}\langle f_2\rangle_{Q_0,s}.
\]
\end{lemma}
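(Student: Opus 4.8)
\textbf{Proof plan for Lemma \ref{iteration}.}

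The plan is to run a stopping-time (Calderón--Zygmund type) argument on the dyadic $\gamma$-cubes contained in $Q_0$, exactly in the spirit of Lacey's proof in \cite{Lacey2}, replacing the Euclidean balls there by the $\gamma$-cubes and using the homogeneous-space structure of the grids $\mathcal{D}_{\vec j}^\gamma$ recorded above. First I would split the sum $\sum_{Q\in\mathcal{Q}}A_Q f_1$ according to the scale of $Q$ relative to $Q_0$: since every $Q\in\mathcal{Q}$ satisfies $Q\subset Q_0$ and $A_Q f_1$ is supported in $Q\subset Q_0$ and defined via $f_1\chi_{\frac13 Q}$, only the restrictions $f_1\chi_{Q_0}$ and $f_2\chi_{Q_0}$ matter, so I may assume $f_1,f_2$ are supported in $Q_0$. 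The point of hypothesis \eqref{lemcond} is that it lets us work as if the averages of $f_1$ and $f_2$ over every intermediate cube are comparable (up to the fixed constant $C_0$) to their averages over $Q_0$; this is precisely the ``top cube'' regime in which no further stopping occurs.

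Next I would decompose $\langle\sum_{Q\in\mathcal{Q}}A_Q f_1,f_2\rangle$ into a ``diagonal/local'' part and an ``off-diagonal/smoothing'' part. For the local part one uses the crude single-scale bound from \cite{Christ1, TW}, namely $\|A_1^\gamma:L^r\to L^s\|\lesssim 1$ and its scale-invariant form, to estimate $|\langle A_Q f_1,f_2\rangle|\lesssim |Q|\langle f_1\rangle_{Q,r}\langle f_2\rangle_{Q,s}$ for a single $Q$; summing over $Q\in\mathcal{Q}$ at a fixed scale gives $\lesssim |Q_0|\langle f_1\rangle_{Q_0,r}\langle f_2\rangle_{Q_0,s}$ by \eqref{lemcond} and disjointness of same-scale cubes, but this alone diverges logarithmically when summed over all scales. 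To gain summability in the scale I would exploit the translation-continuity estimate, Lemma \ref{ctyt}: for $Q\in\mathcal{Q}$ with $\ell(Q)=2^q$ and its $\gamma$-cube ancestor $\widehat Q\subset Q_0$ of a much larger scale $\ell(\widehat Q)=2^{q+m}$, I compare $A_Q f_1$ against its average $\langle A_Q f_1\rangle$ over translates within $\widehat Q$; the difference is controlled by Lemma \ref{ctyt} with $|y_j|\lesssim \ell(\widehat Q)^{\alpha_j}$, producing a gain of $(2^{-q}/2^{-(q+m)})^{-\eta}$-type factor — more precisely a factor $2^{-\eta' m}$ for some $\eta'>0$ coming from the ratio $\ell(Q)/\ell(\widehat Q)$ — which is summable over the scale difference $m$. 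The averaged pieces $\langle A_Q f_1\rangle$ telescope/assemble into something controlled again by the crude $L^r\to L^s$ bound at the coarser scale, and \eqref{lemcond} keeps every constant uniformly bounded by $C_0$. Summing the geometric series in $m$ and then over the (at most $O(\log\,\mathrm{stuff})$, but geometrically weighted, hence convergent) scales yields the claimed bound $\lesssim |Q_0|\langle f_1\rangle_{Q_0,r}\langle f_2\rangle_{Q_0,s}$.

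I would organize the bookkeeping by choosing the constant $N=N(\gamma)$ (from the definition of $A_Q$) and the ancestor gap $m_0$ so that the support of $A_Q f_1$ sits well inside $Q_0$ and so that the continuity gain beats the number of cubes at each intermediate scale; the finitely many ``boundary'' scales $m\le m_0$ are absorbed into the implicit constant via the crude bound. The honest main obstacle is the interplay between the nonisotropic geometry and the continuity estimate: one must verify that when $f_1$ is supported on $\frac13 Q$ and one translates within the much larger cube $\widehat Q$, the relevant translation vector $y$ genuinely satisfies $|y_j|\le \ell(\widehat Q)^{\alpha_j}$ for all $j$ simultaneously (so that Lemma \ref{ctyt} applies with the right normalization), and that the resulting power of the anisotropic ratio $\big|\big(y_1/\ell(\widehat Q)^{\alpha_1},\ldots,y_n/\ell(\widehat Q)^{\alpha_n}\big)\big|^\eta$ is bounded below by a genuine positive power of $2^{-m}$ rather than degenerating in some direction. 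Once that anisotropic continuity gain is pinned down, the summation over scales is routine geometric-series bookkeeping, and the passage from this lemma to Theorem \ref{main} is the standard iteration: repeatedly peel off the ``top'' collection satisfying \eqref{lemcond}, pass to the maximal sub-cubes where the averages have doubled, and collect all the top cubes into a sparse family, with sparseness $\delta=\delta(\gamma)$ following from the usual maximal-function packing argument adapted to the homogeneous-space grid $\mathcal{D}_{\vec j}^\gamma$.
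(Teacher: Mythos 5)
Your plan diverges from the paper's proof and, as written, has the continuity gain pointing in the wrong direction. The paper proves Lemma~\ref{iteration} by first normalizing $\langle f_1\rangle_{Q_0,r}=\langle f_2\rangle_{Q_0,s}=C_0$, then running a Calder\'on--Zygmund decomposition of \emph{both} $f_1$ and $f_2$ at this level: $f_i=g_i+b_i$, with the bad parts $b_i=\sum_L b_{i,L}$ supported on and having mean zero over the stopping cubes $L\subsetneq Q_0$. The bilinear form splits into four pieces. The ``good-good'' and mixed pieces are handled not by the single-scale $L^r\to L^s$ bound but by the uniform $L^p$ boundedness of the truncations of $H_\gamma$ (from \cite{SW}), combined with $\|g_i\|_{L^\infty}\lesssim 1$ and the $L^r$/$L^s$ control of $b_i$ from disjointness of the stopping cubes. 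Your sketch never invokes a CZ decomposition nor the uniform truncated-operator bound, and those are not optional: without them the ``diagonal'' terms have no mechanism to sum over scales.

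The step where your argument fails concretely is the continuity estimate. Lemma~\ref{ctyt} requires the translation vector $y$ to satisfy $|y_j|\le\ell(Q)^{\alpha_j}$ with $\lambda\sim\ell(Q)$, and the gain factor $\bigl|\bigl(y_1/\ell(Q)^{\alpha_1},\ldots,y_n/\ell(Q)^{\alpha_n}\bigr)\bigr|^\eta$ is small precisely when the translations are \emph{small relative to} the scale of $A_\lambda$. You propose to compare $A_Q f_1$ to averages of translates within a coarser ancestor $\widehat Q$ with $\ell(\widehat Q)=2^m\ell(Q)$; then $|y_j|\sim\ell(\widehat Q)^{\alpha_j}\gg\ell(Q)^{\alpha_j}$, so the hypothesis of Lemma~\ref{ctyt} (at scale $\lambda\sim\ell(Q)$) is violated, and even formally the factor would be $\gtrsim 2^{m\alpha\eta}$ --- a \emph{loss}, not a gain of $2^{-\eta' m}$. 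In the paper, the small translations that feed the continuity gain are produced exactly by the CZ bad functions: $b_{2,q-k_2}$ has mean zero over stopping cubes $L$ of side $2^{q-k_2}\le 2^{q}$, so one can replace $A_Q b_{1,q-k_1}$ by $A_Q b_{1,q-k_1}-\tau_y A_Q b_{1,q-k_1}$ averaged over $|y_j|\lesssim 2^{(q-k_2)\alpha_j}$; these are translations at a scale \emph{finer} than $\ell(Q)$, so Lemma~\ref{ctyt} applies to a slightly enlarged $\widetilde Q\supset Q$ and yields the summable factor $2^{-k_2\alpha\eta}$. Finally, the ``main obstacle'' you flag --- verifying the anisotropic normalization $|y_j|\le\ell(\widehat Q)^{\alpha_j}$ simultaneously in all components --- is in fact the routine part; the real structural work is the CZ decomposition, the mean-zero trick, and the invocation of the uniform $L^p$ bound for the full truncated Hilbert transform.
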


Indeed, Lemma \ref{iteration} implying Theorem \ref{main} can be seen from a by now standard sparse domination argument (for example in \cite{Lacey2}). The only slight differences here are that our cubes are nonisotropic, and one needs to pass from dyadic cubes back to $\gamma$-cubes. We include the deduction for the sake of being self-contained. Set $\mathcal{S'}=\emptyset$. Suppose there exists a dyadic $\gamma$-cube $Q_0$ such that $f_1,f_2$ are both supported in $Q_0$ and all terms in the bilinear form $\sum_{Q}\langle A_Qf_1,f_2\rangle$ are zero unless $Q\subset Q_0$. (This can be achieved by assuming the series to be finite and divide $\mathbb{R}^n$ into different quadrants. Our estimate will be uniform over all finite series.) We then start with adding $Q_0$ to $\mathcal{S'}$. Next, define $\mathcal{E}$ as the collection of maximal dyadic $\gamma$-cubes $P\subsetneq Q_0$ such that $\langle f_1\rangle_{P,r}>C\langle f_1\rangle_{Q_0,r}$ or $\langle f_2\rangle_{P,s}>C\langle f_2\rangle_{Q_0,s}$. $\mathcal{E}$ is well defined since the dyadic grid consists of nested cubes, which also implies that the cubes $P\in\mathcal{E}$ are pairwise disjoint and there holds
\[
\left|\bigcup_{P\in\mathcal{E}}P\right|<\frac{1}{2}|Q_0|
\]when $C$ is chosen sufficiently large. Let $\mathcal{Q}:=\big\{Q\subset Q_0:\,Q\not\subset \bigcup_{P\in\mathcal{E}}P\big\}$, then
\[
\sum_{Q\subset Q_0}\langle A_Qf_1,f_2\rangle=\sum_{Q\in\mathcal{Q}}\langle A_Qf_1,f_2\rangle+\sum_{P\in\mathcal{E}}\sum_{Q\subset P}\langle A_Qf_1,f_2\rangle.
\]By Lemma \ref{iteration}, the first term above is bounded by $|Q_0|\langle f_1\rangle_{Q_0,r}\langle f_2\rangle_{Q_0,s}$, and the second term will go into iteration. We then add $\mathcal{E}$ into $\mathcal{S'}$ and recurse the argument with $Q_0$ replaced by each $P\in\mathcal{E}$. The recursion stops when all cubes that appear in the series are exhausted, and we have a sparse form $\Lambda_{\mathcal{S'},r,s}(f_1,f_2)$ associated with a sparse collection of dyadic $\gamma$-cubes. One then observes that $\forall S'\in\mathcal{S'}$, there exists a $\gamma$-cube $S\supset S'$ with $\ell(S)<2\ell(S')$. Let $\mathcal{S}$ be the collection of these $\gamma$-cubes $S$, which is obviously still $\frac{1}{2}$-sparse and there holds
\[
\left\langle \sum_{Q\in \mathcal{D}}A_Qf_1,f_2\right\rangle\leq C'\Lambda_{\mathcal{S}',r,s}(f_1,f_2)\leq C\Lambda_{\mathcal{S},r,s}(f_1,f_2).
\]The proof of Theorem \ref{main} is thus complete up to the verification of Lemma \ref{iteration}.

\begin{proof}[Proof of Lemma \ref{iteration}]
Without loss of generality, assume both $f_1,f_2$ are non-negative. By homogeneity one can also assume that $\langle f_1\rangle_{Q_0,r}=\langle f_2\rangle_{Q_0,s}=C_0$. Perform a Calder\'on-Zygmund decomposition of $f_1,f_2$ at level $C_0$. More precisely, let $\mathcal{L}$ be the collection of maximal dyadic sub $\gamma$-cubes $L$ of $Q_0$ so that
\[
\langle f_1\rangle_{L,r}+\langle f_2\rangle_{L,s}>2C_0,
\]then write
\[
f_i=g_i+b_i,\quad b_i:=\sum_{L\in\mathcal{L}}b_{i,L}:=\sum_{L\in\mathcal{L}}\left(f_i-\langle f_i\rangle_L\right)\chi_L=:\sum_{k=-\infty}^{q_0-1}b_{i,k},
\]where
\[
b_{i,k}:=\sum_{L\in\mathcal{L}:\, \ell(L)=2^k}b_{i,L}.
\]

We then decompose
\[
\left\langle \sum_{Q\in\mathcal{Q}}A_Qf_1,f_2\right\rangle=\left\langle \sum_{Q\in\mathcal{Q}}A_Qg_1,g_2\right\rangle+\left\langle \sum_{Q\in\mathcal{Q}}A_Qg_1,b_2\right\rangle+\left\langle \sum_{Q\in\mathcal{Q}}A_Qb_1,g_2\right\rangle+\left\langle \sum_{Q\in\mathcal{Q}}A_Qb_1,b_2\right\rangle.
\]We claim that the first three terms above are bounded by $|Q_0|$ as a consequence of the uniform boundedness of truncations of $H_\gamma$ on $L^p$, $\forall 1<p<\infty$. 

Indeed, observing that $\|g_i\|_{L^\infty}\lesssim 1$, which follows from the fact that our dyadic grid has the differentiation property and $|Q^{(1)}|\leq C(n,\gamma)|Q|$, $\forall Q\in\mathcal{D}$,
\[
\left|\left\langle \sum_{Q\in\mathcal{Q}}A_Qg_1,g_2\right\rangle\right|\lesssim \|g_1\|_{L^2}\|g_2\|_{L^2}\leq\|g_1\|_{L^\infty}\|g_2\|_{L^\infty}|Q_0|\lesssim |Q_0|.
\]The two mixed terms follow similarly by virtue of the fact that $\|b_1\|_{L^r}\lesssim |Q_0|^{1/r}$ and $\|b_2\|_{L^s}\lesssim |Q_0|^{1/s}$, which is because $\|b_{1,L}\|_{L^r}\lesssim |L|^{1/r}$, $\|b_{2,L}\|_{L^s}\lesssim |L|^{1/s}$, and $\left\{L\in\mathcal{L}\right\}$ are pairwise disjoint stopping cubes. Take the second term as an example, one has
\[
\left|\left\langle \sum_{Q\in\mathcal{Q}}A_Qg_1,b_2\right\rangle\right|\lesssim \|g_1\|_{L^{s'}}\|b_2\|_{L^s}\lesssim \|g_1\|_{L^\infty}|Q_0|^{1/s'}|Q_0|^{1/s}\lesssim |Q_0|.
\]

We are thus left with the last term where bad functions appear in both components. It can be estimated in a similar way as in the proof of Lemma 2.6 of \cite{Lacey2}, with the added difficulty that one has to analyze the interaction between $\gamma$-cubes and dyadic $\gamma$-cubes in order to make use of Lemma \ref{ctyt}. We also emphasize that this ``bad-bad'' case is the only place in the entire proof of Theorem \ref{main} where the $L^p$ improving continuity estimate shown in Lemma \ref{ctyt} comes into play. 

By the assumption (\ref{lemcond}) and the expansion of the bad functions,
\[
\left|\left\langle \sum_{Q\in\mathcal{Q}}A_Qb_1,b_2\right\rangle\right|\leq\sum_{k_1=1}^\infty\sum_{k_2=1}^\infty\sum_{Q\in\mathcal{Q}}\left|\left\langle A_Q b_{1,q-k_1},b_{2,q-k_2}\right\rangle\right|,
\]where we have denoted $2^q:=\ell(Q)$. Further splitting into two cases according to whether $k_1\leq k_2$ or $k_1>k_2$, we claim that
\begin{equation}\label{lemclaim}
\left|\left\langle A_Qb_{1,q-k_1},b_{2,q-k_2}\right\rangle\right|\lesssim 2^{-\alpha\eta(k_1+k_2)/2}|Q|\langle b_{1,q-k_1}\rangle_{Q,r}\langle b_{2,q-k_2}\rangle_{Q,s},
\end{equation}where $\alpha:=\min(\alpha_1,\cdots,\alpha_n)>0$. If this holds true, then using the geometric decay in $k_1,k_2$, the last term will be bounded if one has
\begin{equation}\label{lemclaim2}
\sum_{Q\in\mathcal{Q}}|Q|\langle b_{1,q-k_1}\rangle_{Q,r}\langle b_{2,q-k_2}\rangle_{Q,s}\lesssim |Q_0|,
\end{equation}which follows from the disjointness of $\{b_{i,q-k_i}\chi_Q\}$ with fixed $k_i$, and $1/r+1/s\geq 1$. More precisely, one can first show that (\ref{lemclaim2}) holds true when $r=s=1$ and when $1/r+1/s=1$, then interpolate.

We are left with demonstrating (\ref{lemclaim}), which will be derived from the continuity inequality in Lemma \ref{ctyt}. We only prove the case $k_1\leq k_2$. The other case follows symmetrically as $A_Q$ is essentially self-adjoint ($A_\lambda$ is self-disjoint and the indicator function $\chi_{\frac{1}{3}Q}$ can be relaxed to $\chi_{Q}$ for the rest of the argument). Fix $Q\in\mathcal{Q}$ with $\ell(Q)=2^q$. The mean zero property of $b_{2,q-k_2}$ implies that
\[
\begin{split}
&\left|\left\langle A_Qb_{1,q-k_1},b_{2,q-k_2}\right\rangle\right|\\
\leq&\sum_{L\in\mathcal{L}:\,\ell(L)=2^{q-k_2}}\frac{1}{|L|}\left|\int_L\int_L \left(A_Qb_{1,q-k_1}(x)-A_Qb_{1,q-k_1}(x')\right) b_{2,q-k_2}(x)\,\d x\d x'\right|\\
\lesssim&\frac{1}{|L_0|}\int_{L_0}\left|\int \left(A_Q b_{1,q-k_1}(x)-\tau_yA_{Q}b_{1,q-k_1}(x) \right) b_{2,q-k_2}(x)\,\d x\right|\,\d y,
\end{split}
\]where $L_0$ is a $\gamma$-cube centered at the origin with $\ell(L_0)=2^{q-k_2+1}$. Let the (non-dyadic) $\gamma$-cube $\widetilde{Q}\supset Q$ be so that $\ell(\widetilde{Q})<2\ell(Q)$, applying Lemma \ref{ctyt} to $\widetilde{Q}$ implies that
\[
\begin{split}
\lesssim &\frac{1}{|L_0|}\int_{L_0} \left|\left(\frac{y_1}{2^{q\alpha_1}},\ldots,\frac{y_n}{2^{q\alpha_n}}\right)\right|^\eta |\widetilde{Q}| \langle b_{1,q-k_1}\chi_Q\rangle_{\widetilde{Q},r}\langle b_{2,q-k_2}\chi_Q\rangle_{\widetilde{Q},s}\,\d y\\
\lesssim &\left| \left(2^{-k_2\alpha_1},\ldots,2^{-k_2\alpha_n}\right)\right|^\eta |Q|\langle b_{1,q-k_1}\rangle_{Q,r}\langle b_{2,q-k_2}\rangle_{Q,s}\\
\lesssim &2^{-k_2\alpha\eta}|Q|\langle b_{1,q-k_1}\rangle_{Q,r}\langle b_{2,q-k_2}\rangle_{Q,s},
\end{split}
\]which completes the proof. 

\end{proof}

\section{Curves with nonvanishing torsion: proof of Theorem \ref{main2}}\label{nonvanishing}
The proof of Theorem \ref{main2} proceeds in a similar fashion as Theorem \ref{main}, so we only point out some key differences. 

First, it will again be helpful to study $H_\gamma$ by decomposing it into dyadic scales, and one can define the localized single scale operators in a similar way as before. Let $k\in\mathbb{Z}_-$, define the single scale operator $A_{2^k}^\gamma$ as in (\ref{average}). We decompose $A_{2^k}^\gamma$ into localized pieces and will again drop the dependence on $\gamma$ from now on. According to the Taylor expansion formula (\ref{Taylor}), there exists a constant $N=N(\gamma)$ sufficiently large such that for a dyadic cube $Q$ with $\ell(Q)=2^q$, $q\leq N$, the localized operator 
{{$$A_Qf:=A_{2^{q-N}}(f\chi_{\frac{1}{3}Q})$$}}is supported in $Q$. {{Here $cQ$ denotes the cube with the same center and orientation as $Q$ such that the side-length of $cQ$ of the side parallel to $e_k$ is $c\ell(Q)^k$, $k=1,\ldots,n$. Note that $cQ$ is not necessarily a $\gamma$-cube. Even though the coordinate system is rotated and the curve $\gamma$ is not exactly invariant under any nonisotropic dilations, the support condition still follows from the same reasoning as the monomial curve case observing the Taylor expansion (\ref{Taylor}) and the fact that vectors $\gamma^{(k)}(0)$ are uniformly bounded for $k=1,\ldots,n$, and then choosing the constant $N$ sufficiently large. }} 

By rotating the shifted dyadic grids constructed in (\ref{grids}), one obtains $3^n$ shifted grids such that the following decomposition holds true:
{{\[
A_{2^{q-N}}f=\sum_{\vec{j}}\sum_{\substack{Q:\,Q\in\mathcal{D}_{\vec{j}}\\ \ell(Q)=2^{q}}} A_Qf.
\]}}There is thus the decomposition
\[
H_\gamma= \sum_{\vec{j}}\sum_{Q\in\mathcal{D}_{\vec{j}}}A_Qf,
\]and it suffices to study the operator $\sum_{Q\in\mathcal{D}}A_Qf$ for a fixed dyadic grid $\mathcal{D}$.

The proof of Theorem \ref{main2} would be complete once we verify that there holds a version of Lemma \ref{ctyt}, the $L^p\to L^q$ improving continuity estimate for the curves of nonvanishing torsion as well, which we state as the lemma below. Note that the rest of the proof proceeds in the same (in fact even more straightforward) way as the one for Theorem \ref{main}, as the cubes here have the same nonisotropic dilation structure as the ones associated to the moment curve.
\begin{remark}\label{small cube}
One can construct the sparse collection $\mathcal{S}$ so that all $\gamma$-cubes $Q\in\mathcal{S}$ satisfy $\ell(Q)\lesssim_N 1$. This can be done by first identifying a disjoint collection of $\gamma$-cubes $\{Q_0^1,\ldots,Q_0^M\}$ whose sidelength $\ell(Q_0^i)=2^{N}$, $\forall 1\leq i\leq M$, such that the union of $\left\{\frac{1}{3}Q_0^i\right\}$ covers the support of $f$. Then decompose $f$ into $M$ parts each of which is supported in $\frac{1}{3}Q_0^i$ for some $i$. One can then proceed similarly as in the proof of Theorem \ref{main} to construct a sparse collection for each part. The union of the $M$ sparse collections obviously satisfies the desired property.
\end{remark}
\begin{lemma}\label{cty2}
Let $f_1,f_2$ be supported in a $\gamma$-cube $Q$ and $\lambda\sim \ell(Q)$. For any $y\in\mathbb{R}^n$ such that $|y_j|\leq\ell(Q)^{j}$, $\forall j=1,\ldots,n$, and $(r,s)$ in the {{range as in the statement of Theorem \ref{main2}}}, there holds for some constant $\eta=\eta(r,s,n)$ that
\[
|\langle A_{\lambda}^\gamma f_1-\tau_y A_{\lambda}^\gamma f_1,f_2\rangle|\lesssim \left|\left(\frac{y_1}{\ell(Q)},\ldots,\frac{y_n}{\ell(Q)^{n}}\right)\right|^\eta |Q|\langle f_1\rangle_{Q,r}\langle f_2\rangle_{Q,s},
\]where the implicit constant depends on $r,s,\gamma$.
\end{lemma}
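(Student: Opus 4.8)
The plan is to mimic the proof of Lemma \ref{cty1} and \ref{ctyt}, but now the curve $\gamma$ genuinely lacks a dilation structure, so the scaling step must be replaced by a direct argument on a rescaled curve. First I would observe that it suffices, by the same change-of-variables/rescaling reduction used to pass from Lemma \ref{cty1} to Lemma \ref{ctyt}, to prove the unit-scale statement: namely that for $|y_j|\le 1$,
\[
\|A^\gamma_1-\tau_y A^\gamma_1:\,L^r\to L^s\|\lesssim |y|^\eta,
\]
and then undo the (non-isotropic, but now only \emph{approximate}) dilation. Here the key point is that, because the operator is local ($|t|\le 1$) and $\gamma$ is $C^n$ with nonvanishing torsion, one can after the rotation to the $\{e_1,\ldots,e_n\}$ coordinates and after a parabolic-type rescaling $x\mapsto(\ell^{-1}x_1,\ldots,\ell^{-n}x_n)$ realize $A^\gamma_{\lambda}$, $\lambda\sim\ell$, as an averaging operator $A^{\gamma_\ell}_1$ along a rescaled curve $\gamma_\ell(t)=(\gamma_\ell^{(1)}(t),\ldots,\gamma_\ell^{(n)}(t))$ which, by \eqref{Taylor}, is a uniformly (in $\ell\lesssim 1$) $C^n$-small perturbation of the moment curve $(t,t^2/2,\ldots,t^n/n!)$; in particular $\gamma_\ell$ has torsion bounded below uniformly in $\ell$ on $|t|\sim 1$, and the remainder $O(t^{n+1})$ contributes only $O(\ell)$ in $C^n$. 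This is exactly the place where locality of $H_\gamma$ is used and where Definition \ref{cubenvt} of the $\gamma$-cubes (side-length $\ell^k$ in direction $e_k$) enters.

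Next I would run the two-endpoint interpolation exactly as in Lemma \ref{cty1}. For the $L^r\to L^s$ bound with no translation, the non-endpoint $L^p$-improving inequality of \cite{Christ1}, \cite{TW} for averaging operators along finite-type/torsion curves applies \emph{uniformly} to the family $\gamma_\ell$, since those results depend only on lower bounds for the torsion and $C^n$ bounds, both of which hold uniformly for $\ell\lesssim 1$. For the $L^2\to L^2$ translation-continuity bound, I would again invoke van der Corput: $|\widehat{d\sigma_{\gamma_\ell}}(\xi)|\lesssim(1+|\xi|)^{-1/n}$ with a constant uniform in $\ell$, so that by Plancherel
\[
\|A^{\gamma_\ell}_1-\tau_y A^{\gamma_\ell}_1:\,L^2\to L^2\|\lesssim\|(1-e^{iy\cdot\xi})\widehat{d\sigma_{\gamma_\ell}}(\xi)\|_\infty\lesssim|y|^{\eta_0},\quad \eta_0=\eta_0(n)>0.
\]
Interpolating these two bounds gives $\|A^{\gamma_\ell}_1-\tau_y A^{\gamma_\ell}_1:L^r\to L^s\|\lesssim|y|^\eta$ for $(1/r,1/s)$ in the interior of $\Omega$, with $\eta$ and the implicit constant independent of $\ell\lesssim1$. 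Undoing the rescaling and rotation — which turns the Euclidean ball $|y|\le 1$ into the box $|y_j|\le\ell(Q)^j$ and turns the displacement $|y|$ into $|(y_1/\ell(Q),\ldots,y_n/\ell(Q)^n)|$, and introduces the usual power of $|Q|$ that is absorbed into $|Q|\langle f_1\rangle_{Q,r}\langle f_2\rangle_{Q,s}$ via Hölder exactly as in the deduction of Lemma \ref{ctyt} — yields the claimed inequality.

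The main obstacle is making the \textbf{uniformity in $\ell$} genuinely rigorous: one must check that the $L^p$-improving constants from \cite{Christ1}, \cite{TW} and the van der Corput constant are stable under the $C^n$-small perturbation $\gamma_\ell$ of the moment curve, uniformly as $\ell\to 0$, including the fact that the $O(t^{n+1})$ Taylor remainder does not degrade the torsion lower bound on $|t|\sim1$ after rescaling (it scales by an extra factor of $\ell$, hence is harmless). A secondary technical point is bookkeeping the rotation: since the $e_k$ are an orthonormal frame, the rotation is an isometry and commutes with the Plancherel argument and with $L^p$ norms, so it costs nothing, but one should state this explicitly. Everything else — the reduction to unit scale, the interpolation, and the final Hölder step converting operator norms into the sparse-form-type expression — is routine and parallels Section \ref{mono} verbatim, which is why, as remarked in the text, the rest of the proof of Theorem \ref{main2} is even more straightforward than that of Theorem \ref{main}.
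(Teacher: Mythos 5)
Your proposal follows the same broad strategy as the paper (establish the unit-scale continuity estimate, rescale by the parabolic dilation in the rotated $\{e_k\}$-coordinates, and verify that the rescaled curve still satisfies the hypotheses needed to invoke the $L^p$-improving and van der Corput bounds), but the rescaling step is organized differently, and this is worth spelling out.

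The paper does \emph{not} rescale directly at the origin. It first cuts $A_\lambda^\gamma$ into $M_2$ pieces $A_{\lambda,i}^\gamma$ over intervals $I_i$, recenters the Gram--Schmidt frame at the left endpoint $a_i$ of each $I_i$, dilates parabolically in the $a_i$-frame, and then proves by an explicit determinant computation that the rescaled curve $\nu$ has nonvanishing torsion, using that $\gamma_k^{(m)}(a_i)=0$ for $m<k$ to control the lower-triangular entries and the $\lambda$-smallness to control the upper-triangular ones. You instead rescale once, at $t=0$, by $\gamma_\ell(t)=\delta_\ell^{-1}\gamma(\ell t)$. This shortcut is in fact legitimate, and the reason is precisely the one you should record: the Gram--Schmidt frame at $0$ forces $\gamma_k^{(m)}(0)=0$ for all $m<k$, so Taylor's theorem gives $|\gamma_k^{(m)}(s)|\lesssim |s|^{k-m}$, whence $\gamma_{\ell,k}^{(m)}(t)=\ell^{m-k}\gamma_k^{(m)}(\ell t)$ is uniformly bounded for $|t|\sim 1$ and $\ell\lesssim 1$, with $\gamma_\ell\to(c_1t,\dots,c_nt^n)$ in $C^n$. (One small correction: the limit is this anisotropically scaled moment curve with $c_k=|\gamma_k^{(k)}(0)|/k!$, not $(t,t^2/2,\dots,t^n/n!)$ on the nose; this is harmless since $c_k\ne 0$.) The torsion lower bound for $\gamma_\ell$ is then immediate because $\det(\gamma_\ell',\dots,\gamma_\ell^{(n)})(t)=\det(\gamma',\dots,\gamma^{(n)})(\ell t)$ by multilinearity, so no separate torsion verification is really required. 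In this sense your version is cleaner than the paper's.

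What you honestly flag as ``the main obstacle'' --- uniformity in $\ell$ of the Christ / Tao--Wright $L^p$-improving constants and of the van der Corput constant over the family $\gamma_\ell$ --- is indeed a point that both your argument and the paper's leave at the same level of detail: the paper's $M_2$-decomposition makes the closeness of $\nu$ to the moment curve more explicit, but it too ultimately invokes \eqref{A1} for the rescaled curve and relies on the constants being stable over the family. Since you have uniform $C^n$ bounds and a uniform torsion lower bound on $|t|\sim1$, and since the cited $L^p$-improving results are quantitative in exactly these quantities, this is the correct thing to say, and your write-up identifies it. So: same approach as the paper in essence, with a genuinely simpler rescaling step whose justification you have correctly located in the Gram--Schmidt normalization at $t=0$; no gap beyond the uniformity point that the paper itself treats implicitly.
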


\begin{proof}
Similarly as in the monomial curve case, it suffices to show that
\begin{equation}\label{last one}
\|A_\lambda^\gamma-\tau_yA_{\lambda}^\gamma:\, L^r\to L^{s'}\|\lesssim \left|\left(\frac{y_1}{\lambda},\ldots,\frac{y_n}{\lambda^n}\right)\right|^\eta\lambda^{(1/s'-1/r)\sum_{j=1}^n j}
\end{equation}whenever $|y_j|\leq \lambda^{j}$, $\forall j=1,\ldots,n$.

Since the curve $\gamma$ has nonvanishing torsion everywhere, for any pair $(r,s)$ in the region dual to the claimed range, there holds the $L^p\to L^q$ improving estimate for the associated single scale operator $A_1^\gamma$ according to \cite{TW}. Then similarly as in Lemma \ref{ctyt}, the Fourier decay implies the desired estimate for $\lambda=1$:
\begin{equation}\label{A1}
\|A_1^\gamma-\tau_y A_1^\gamma: L^r\to L^{s'}\|\lesssim |y|^\eta,\,\quad |y_j|\leq 1,\,\forall j=1,\ldots,n,
\end{equation}for some $\eta=\eta(r,s,n)>0$. We also remark that the $|y_j|\leq 1$ condition is not crucial in the above as the $|y_j|>1$ case is trivial. And it can be easily seen that (\ref{A1}) holds true for any $\lambda$ as well with the implicit constant on the RHS depending on $\lambda$. Therefore it is left to prove the case at scale $\lambda\sim 2^q$, for $q\leq N$ larger than a fixed constant $M_1$ to be determined later.

Fix a scale $\lambda\sim\ell(Q)=2^q$, $q\leq N$, we will do a change of variables to dilate the curve at scale $2^q$ localized at $Q$ to another curve at scale $1$, and apply the above estimate. What is important is to make sure that the dilated curve still has nonvanishing torsion everywhere. This is indeed the case. To see this, we first decompose $A_{\lambda}^\gamma$ into smaller pieces:
\[
A_{\lambda}^\gamma f(x)=\sum_{i=1}^{M_2}A_{\lambda,i}^\gamma f(x):=\sum_{i=1}^{M_2} \int_{I_i}f(x-\gamma(|t|))\frac{\d t}{t},\quad I_i:=\left[\frac{\lambda}{2}+\frac{i-1}{M}\cdot\frac{\lambda}{2}, \frac{\lambda}{2}+\frac{i}{M_2}\cdot\frac{\lambda}{2}\right],
\]where $M_2$ is a sufficiently large constant to be chosen later. It thus suffices to prove (\ref{last one}) for each $i$ by triangle inequality.

Fix $i=1,\ldots,M_2$, construct a new coordinate system determined by the derivatives of $\gamma$ at the left endpoint $a_i$ of $I_i$ instead of the origin. We refer to the Taylor expansion fomular (\ref{Taylor}) for such a construction. Let $\gamma_k(t)$ be the $k$-th component of $\gamma(t)$ in the new coordinate system, $\forall k=1,\ldots,n$. Then the construction of the coordinate system implies that 
\[
\gamma_k^{(\ell)}(a_i)=0,\quad \forall 1\leq\ell<k\leq n.
\]For any $\epsilon>0$, there exists $M_2$ sufficiently large so that for any $t\in I_i$,
\[
\gamma_k^{(\ell)}(t)<\epsilon,\quad \forall 1\leq\ell<k\leq n,
\]which implies that
\[
0\neq\det\begin{pmatrix} \gamma_1^{(1)}(t) & \gamma_2^{(1)}(t)&\cdots &\gamma_n^{(1)}(t)\\
\gamma_1^{(2)}(t) &\gamma_2^{(2)}(t) & \cdots &\gamma_n^{(2)}(t)\\
\vdots & \vdots &\ddots &\vdots\\
\gamma_1^{(n)}(t)& \gamma_2^{(n)}(t)& \cdots &\gamma_n^{(n)}(t)
\end{pmatrix}\approx \prod_{k=1}^n\gamma_k^{(k)}(t),
\]where the approximation of the determinant by its trace follows from the fact that the matrix is essentially lower triangular.

Now consider the dilation $\delta_\lambda (x):=(\lambda x_1,\ldots,\lambda^n x_n)$ in the fixed coordinate system determined by $a_i$. For any function $f$,
\[
A_1^\nu f(x):=A^\gamma_{\lambda,i}(f\circ\delta^\gamma_{\lambda^{-1}}(\cdot))(\delta^\gamma_{\lambda}(x))
\]is a single scale convolution operator associated to a new curve $\nu$ at scale $1$. In particular, $\nu$ satisfies for any $t$ so that $\lambda t\in I_i$ that
\[
\nu_k(t)=\lambda^{-k}\gamma_k(\lambda t),\quad k=1,\ldots,n.
\]Therefore, it suffices to verify that $\nu$ has nonvanishing torsion everywhere, which combined with (\ref{A1}) and change of variables will complete the proof.

At any $t$ such that $\lambda t\in I_i$, the torsion of $\nu$ is equal to
\[
\begin{split}
\det\begin{pmatrix} \nu^{(1)}(t) & \cdots & \nu^{(n)}(t)\end{pmatrix}=&\det\begin{pmatrix} \nu_1^{(1)}(t) &\nu_1^{(2)}(t)&\cdots &\nu_1^{(n)}(t)\\
\nu_2^{(1)}(t) &\nu_2^{(2)}(t) &\cdots &\nu_2^{(n)}(t)\\
\vdots & \vdots &\ddots &\vdots\\
\nu_n^{(1)}(t) &\nu_n^{(2)}(t) &\cdots &\nu_n^{(n)}(t)
\end{pmatrix}\\
=&\det\begin{pmatrix}  \gamma_1^{(1)}(\lambda t) & \lambda\gamma_1^{(2)}(\gamma t) &\cdots & \lambda^{n-1}\gamma_1^{(n)}(\lambda t)\\
\lambda^{-1}\gamma_2^{(1)}(\lambda t) & \gamma_2^{(2)}(\lambda t) &\cdots & \lambda^{n-2}\gamma_2^{(n)}(\lambda t)\\
\vdots &\vdots &\ddots &\vdots\\
\lambda^{-(n-1)}\gamma_n^{(1)}(\lambda t) &\lambda^{-(n-2)}\gamma_n^{(2)}(\lambda t) &\cdots &\gamma_n^{(n)}(\lambda t)
\end{pmatrix}\\
\approx &\prod_{k=1}^n \gamma_k^{(k)}(\lambda t)\neq 0,
\end{split}
\]where the approximation on the last line follows from the fact all the upper triangular entries in the matrix are negligibly small when $\lambda$ is sufficiently small, which can be achieved by fixing the constant $M_1$ large enough. Therefore, the curve $\nu$ has nonvanishing torsion and the proof is complete.
\end{proof}

\section{Weighted estimates and sharpness of sparse bounds}\label{weighted}
\subsection{Weighted norm inequalities}
Sparse domination is particularly powerful in deducing sharp weighted norm inequalities. In this section, we introduce such results as corollaries of Theorem \ref{main} and \ref{main2}. The class of weights that are involved are the Muckenhoupt $A_p$ weights defined via $\gamma$-cubes, which can be described as special cases of weights in spaces of homogeneous type. These seem to be the first weighted norm inequalities for singular integrals along curves, and the estimates are sharp for the sparse forms.

Given a set $X$, the triple $(X,\rho,\mu)$ is called a \emph{space of homogenous type} if $\rho$ is a quasi-metric, that is, a metric except that the triangle inequality is replaced by the quasi-triangle inequality:
\[
\rho(x,z)\leq C_0(\rho(x,y)+\rho(y,z))\qquad \forall x,y,z\in X,
\]and the positive measure $\mu$ is doubling:
\[
0<\mu(B(x,2r))\leq C_1\mu(B(x,r))<\infty,\qquad \forall x\in X, r>0,
\]for some absolute constants $C_0,C_1>0$. Such spaces were first introduced by Coifman and Weiss in \cite{CW}. It is a natural extension of the Euclidean space $\mathbb{R}^n$ equipped with Lebesgue measure, and many results in the Euclidean space extend to this setting. We also refer the reader to \cite{Christ2} for a construction of dyadic cubes in general spaces of homogeneous type.

Given a space of homogeneous type $(X,\rho,\mu)$ and $1<p<\infty$, there is an associated class of Muckenhoupt type $A_p$ weights consisting of locally integrable positive functions $w$ on $X$ such that
\[
[w]_{A_{p}}:=\sup_{B}\langle w\rangle_{B}\langle w^{1-p'}\rangle_B^{p-1}<\infty,
\]where the supremum is taken over all quasi-balls $B$ in $(X,\rho,\mu)$. We will also need $RH_p$, the reverse H\"older class of weights, containing locally integrable positive functions $w$ such that
\[
[w]_{RH_p}:=\sup_{B}\langle w\rangle_B^{-1}\langle w\rangle_{B,p}<\infty,
\]where again $B$ ranges over all quasi-balls in $(X,\rho,\mu)$. The following estimate is obtained in Section 6 of \cite{BFP}.
\begin{lemma}\label{sparseweight}
Given $r,s\in [1,\infty)$ and a sparse collection $\mathcal{S}$ (w.r.t. measure $\mu$) of quasi-balls $B$ in $(X,\rho,\mu)$. Then for any $r<p<s'$ there exists constant $C=C(r,s,p)$ so that for every weight $w\in A_{\frac{p}{r}}\cap RH_{(\frac{s'}{p})'}$, there holds
\[
\Lambda_{\mathcal{S},r,s}(f,g)\leq C\left([w]_{A_{\frac{p}{r}}}[w]_{RH_{(\frac{s'}{p})'}}\right)^\alpha\|f\|_{L^p(w)}\|g\|_{L^{p'}(w^{1-p'})},\quad \alpha:=\max\left(\frac{1}{p-r},\frac{s'-1}{s'-p}\right).
\]
\end{lemma}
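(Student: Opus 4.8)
\textbf{Proof plan for Lemma \ref{sparseweight}.} The statement is quoted from \cite{BFP}, so the plan is essentially to recall the standard argument for sparse-form-to-weighted-norm estimates, adapted to the space of homogeneous type $(X,\rho,\mu)$ attached to the $\gamma$-cubes. First I would fix $r,s\in[1,\infty)$, a sparse collection $\mathcal{S}$, and an exponent $p\in(r,s')$, and set $w\in A_{p/r}\cap RH_{(s'/p)'}$. The key preliminary reduction is to rewrite the sparse form in terms of the rescaled functions $F:=|f|^r w$-type quantities and to absorb the exponent $r$: since $\langle f\rangle_{B,r}=\langle |f|^r\rangle_B^{1/r}$, each summand $|B|\langle f\rangle_{B,r}\langle g\rangle_{B,s}$ is controlled by $\mu(B)\langle |f|^r\rangle_B^{1/r}\langle |g|^s\rangle_B^{1/s}$ up to the doubling constant (here $|B|\sim\mu(B)$ for the Lebesgue-type measure, or one simply works with $\mu$ throughout). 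The heart of the matter is then to prove the ``unweighted-looking'' sparse bound $\Lambda_{\mathcal{S},1,1}(\varphi,\psi)\lesssim \|\varphi\|_{L^q(v)}\|\psi\|_{L^{q'}(v^{1-q'})}$ for a single $A_q$ weight $v$, with the $A_q$-characteristic dependence $[v]_{A_q}^{\max(1,1/(q-1))}$; this is the Lacey--Hyt\"onen-type sharp sparse bound, whose proof in spaces of homogeneous type uses a dyadic lattice from \cite{Christ2} together with a stopping-time (Carleson embedding) argument.

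The main steps, in order, are: (i) replace the collection $\mathcal{S}$ of quasi-balls by a comparable sparse collection of dyadic cubes from a finite family of adjacent dyadic lattices on $(X,\rho,\mu)$, using the construction of \cite{Christ2} and the doubling property — this is routine and costs only dimensional constants; (ii) perform the exponent change of variables: set $\sigma:=w^{1-p'}$ (the dual weight), $q:=p/r$, and observe that $w\in A_{p/r}\cap RH_{(s'/p)'}$ is equivalent, by a well-known self-improvement/duality for $A_p\cap RH_q$ classes, to a single $A_\beta$-type membership for the pair of exponents governing $(r,s)$; concretely one writes $\langle g\rangle_{B,s}=\langle |g|^s\rangle_B^{1/s}$ and uses the $RH_{(s'/p)'}$ condition on $w$ to pass the $s$-average against $w$ into an $L^{p'}(\sigma)$-controlled quantity; (iii) run the Carleson-measure / stopping-cube argument: decompose $\mathcal{S}$ into principal cubes relative to the weighted averages $\langle \varphi\rangle^\sigma_B$ and $\langle\psi\rangle^w_B$ so that on each tree the averages are roughly constant, bound the sum over each tree geometrically, and invoke the Carleson embedding theorem with respect to $\sigma$ (whose Carleson constant is controlled by $[w]_{A_q}$) to sum over principal cubes; (iv) collect the $A_{p/r}$ and $RH_{(s'/p)'}$ characteristics into the stated power $\alpha=\max\big(\tfrac{1}{p-r},\tfrac{s'-1}{s'-p}\big)$, where the two terms come respectively from the ``$\varphi$-side'' Carleson embedding (exponent $\tfrac{1}{q-1}=\tfrac{r}{p-r}$, up to the harmless factor $r$) and the dual ``$\psi$-side'' estimate.

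The step I expect to be the main obstacle is (iii) together with the precise bookkeeping in (ii) and (iv): getting the sharp power $\alpha$ requires splitting the argument so that each weight characteristic is used exactly once and in the right place — one typically proves the two endpoint cases $p=r$ (where only $[w]_{A_1}$-type information is used, giving the $\tfrac{1}{p-r}$ blow-up as $p\downarrow r$) and $p=s'$ (dualized, giving the $\tfrac{s'-1}{s'-p}$ blow-up as $p\uparrow s'$) and then interpolates, since the sparse form is itself an interpolation-friendly object. All of this is carried out in detail in Section 6 of \cite{BFP} for general sparse forms on spaces of homogeneous type, and since the $\gamma$-cubes of Definitions \ref{gcubedef} and \ref{cubenvt} generate exactly such a space (the nonisotropic balls are doubling with constant depending only on $n$ and $\gamma$), the lemma applies verbatim; hence in the paper itself one simply cites \cite{BFP} rather than reproducing the argument.
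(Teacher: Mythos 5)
Your proposal correctly recognizes the situation: the paper offers no proof of this lemma, but simply invokes Section~6 of \cite{BFP}, and you explicitly note this at the end. Your sketch of the \cite{BFP} argument (pass to a finite family of adjacent dyadic lattices on the space of homogeneous type via \cite{Christ2}, absorb the exponent $r$ via $\langle f\rangle_{B,r}=\langle|f|^r\rangle_B^{1/r}$ and the change $q=p/r$ with dual weight $\sigma=w^{1-p'}$, then a stopping-time/Carleson-embedding argument yielding the stated power $\alpha$) is a fair high-level summary of that reference, so the proposal matches the paper's approach; the one small imprecision is the suggestion that the sharp exponent is obtained by interpolating between the endpoints $p\downarrow r$ and $p\uparrow s'$, whereas \cite{BFP} obtain the exponent directly from the two-sided stopping-time estimate rather than by interpolation, though this does not affect the validity of citing the result.
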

Lemma \ref{sparseweight}, together with Theorem \ref{main} and \ref{main2}, implies immediately the following weighted norm inequalities for Hilbert transforms along curves.
\begin{corollary}\label{weight}
Let $\gamma$ be a monomial curve (resp. curve with nonvanishing torsion) and $(r,s)$ be in the range as in the statement of Theorem \ref{main} (resp. Theorem \ref{main2}). Then for any $r<p<s'$, and weight $w\in A_{\frac{p}{r}}\cap RH_{(\frac{s'}{p})'}$ defined via $\gamma$-cubes as in Definition \ref{gcubedef} (resp. Definition \ref{cubenvt}), there holds
\[
\|H_{\gamma}\|_{L^p(w)\to L^p(w)}\leq C\left([w]_{A_{\frac{p}{r}}}[w]_{RH_{(\frac{s'}{p})'}}\right)^\alpha,\quad \alpha:=\max\left(\frac{1}{p-r},\frac{s'-1}{s'-p}\right).
\]
\end{corollary}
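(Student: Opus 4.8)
The plan is to derive Corollary \ref{weight} directly by combining the sparse domination theorems with the abstract weighted bound of Lemma \ref{sparseweight}. First I would fix a monomial curve (resp.\ a $C^n$ curve with nonvanishing torsion) $\gamma$ and a pair $(r,s)$ in the sparse region, so that $(1/r,1/s')$ lies in the interior of $\Omega$. I would then fix $p$ with $r<p<s'$ and a weight $w\in A_{p/r}\cap RH_{(s'/p)'}$, where both classes are defined with respect to $\gamma$-cubes. The observation that unlocks everything is that the collection of $\gamma$-cubes (together with Lebesgue measure $|\cdot|$ and the quasi-metric whose quasi-balls are comparable to $\gamma$-cubes) forms a space of homogeneous type: the nonisotropic dilation structure $\delta^\gamma_\lambda$ makes the measure doubling, with doubling constant depending only on $n$ and $\gamma$, and the $\gamma$-cubes are comparable to the metric balls of this space. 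Hence the $A_{p/r}$ and $RH_{(s'/p)'}$ constants defined via $\gamma$-cubes agree, up to constants depending on $n$ and $\gamma$, with those defined via the quasi-balls appearing in Lemma \ref{sparseweight}.

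Next I would run the duality argument. For fixed compactly supported bounded $f,g$, apply Theorem \ref{main} (resp.\ Theorem \ref{main2}) to obtain a $\delta$-sparse collection $\mathcal S$ of $\gamma$-cubes with
\[
|\langle H_\gamma f,g\rangle|\le C(n,\gamma)\,\Lambda_{\mathcal S,r,s}(f,g).
\]
Then feed this sparse form into Lemma \ref{sparseweight} with the exponent $p$ and the weight $w$ chosen above, which yields
\[
|\langle H_\gamma f,g\rangle|\le C\big([w]_{A_{p/r}}[w]_{RH_{(s'/p)'}}\big)^{\alpha}\,\|f\|_{L^p(w)}\|g\|_{L^{p'}(w^{1-p'})},
\]
with $\alpha=\max\big(\tfrac{1}{p-r},\tfrac{s'-1}{s'-p}\big)$. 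Taking the supremum over all $g$ with $\|g\|_{L^{p'}(w^{1-p'})}\le 1$ and using that $(L^p(w))^*=L^{p'}(w^{1-p'})$ (valid since $w$ is a genuine $A_{p/r}\subset A_{p}$ weight, hence $L^p(w)$ is reflexive and its dual is the stated space), one concludes
\[
\|H_\gamma f\|_{L^p(w)}\le C\big([w]_{A_{p/r}}[w]_{RH_{(s'/p)'}}\big)^{\alpha}\,\|f\|_{L^p(w)}
\]
for all $f$ in a dense subclass, and then for all $f\in L^p(w)$ by density and the a priori $L^p$-boundedness of $H_\gamma$ (from \cite{SW} in the monomial case, and from the local nature together with standard arguments in the nonvanishing torsion case). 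This gives the asserted operator norm bound.

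I do not expect a serious obstacle here; this corollary is a formal consequence of the two main theorems and the imported Lemma \ref{sparseweight}. The one point that requires genuine care is the identification of the relevant $\gamma$-cube structure with a space of homogeneous type and the corresponding comparison of weight characteristics: one must check that the $\gamma$-cubes are uniformly comparable (in measure and in containment, up to fixed dilates) to the quasi-balls of a quasi-metric, that the nesting and differentiation properties from Section \ref{mono} are exactly the properties needed for the dyadic machinery of \cite{BFP} and \cite{Christ2} to apply, and that passing between the three equivalent collections (dyadic $\gamma$-cubes, $\gamma$-cubes, and quasi-balls) only costs constants depending on $n$ and $\gamma$. A secondary technical point is ensuring the sparse collection $\mathcal S$ produced by Theorems \ref{main} and \ref{main2} is sparse with respect to Lebesgue measure in precisely the sense required by Lemma \ref{sparseweight}; since both notions of sparseness are stated via pairwise disjoint major subsets $E_S\subset S$ with $|E_S|>\delta|S|$, this is immediate. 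Once these bookkeeping matters are settled, the proof is just the chain of inequalities above followed by duality.
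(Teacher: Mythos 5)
Your proposal is correct and follows essentially the same route as the paper: apply Theorem \ref{main} (resp.\ Theorem \ref{main2}) to obtain a sparse bound, invoke Lemma \ref{sparseweight} after identifying $\gamma$-cubes with quasi-balls in a suitable space of homogeneous type, and conclude by duality. The paper simply makes the quasi-metric explicit --- $\rho_\gamma(x,y)=\max_j|x_j-y_j|^{1/\alpha_j}$ in the monomial case and $\rho_\gamma(x,y)=\max_j|x_j-y_j|^{1/j}$ in the rotated coordinates for the nonvanishing-torsion case --- whereas you assert its existence; otherwise the arguments coincide.
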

In the above, when $\gamma$ is taken as a general curve with nonvanishing torsion, the Hilbert transform $H_{\gamma}$ is understood as the local operator defined in (\ref{Hnvt}).

It suffices to verify that, for $\gamma$ being a monomial curve or a $C^n$ curve with nonvanishing torsion, there exists a space of homogeneous type $(X,\rho,\mu)$ such that the associated $\gamma$-cubes are comparable to the quasi-balls. Indeed, let $X=\mathbb{R}^n$ and $\mu$ be the Lebesgue measure on $\mathbb{R}^n$. 

If $\gamma(t)$ is a monomial curve with parameters $(\alpha_1,\ldots,\alpha_n)$, this can be seen by defining a quasi-metric
\[
\rho_\gamma(x,y):=\max_{j=1,\ldots,n}\left\{|x_j-y_j|^{1/\alpha_j}\right\},\qquad \forall x,y\in X,
\]which turns $(X,\rho_\gamma,\mu)$ into a space of homogeneous type with constants $C_0,C_1$ depending on $\gamma$ and the dimension.

If $\gamma$ is a $C^n$ curve with nonvanishing torsion, rotate the coordinate system according to the orientation of $\gamma$-cubes and define
\[
\rho_{\gamma}(x,y):=\max_{j=1,\ldots,n}\left\{|x_j-y_j|^{1/j}\right\},\qquad \forall x,y\in X,
\]where $x_j$ denotes the $j$-th component of $x$ in the new coordinate system. Then again, this obviously gives rise to a space of homogeneous type. Note that in this case, one can in fact slightly improve the weighted estimates for $H_\gamma$ by considering the class of $A_p$ weights defined via $\gamma$-cubes with side-length $\ell\lesssim 1$. These cubes are comparable to the quasi-balls defined via $\rho_{\gamma}$ and determine a larger class of weights. We omit the details.

\subsection{Sharpness of the sparse region}
The region $\Omega$ of $(r,s)$ where Theorem \ref{main} and \ref{main2} hold true is sharp up to the endpoint, which follows from the fact that the $L^p$ improving estimate for single scale operators $A_\lambda^\gamma$ fails to hold outside the region $\Omega$.

More precisely, fix a curve $\gamma$ in either of the two classes and a pair $(r,s)$ in the exterior of the region $\Omega$ as in the statement of Theorem \ref{main} and \ref{main2}. Suppose for any functions $f,g$ there exists a sparse collection $\mathcal{S}$ of $\gamma$-cubes such that
\[
\langle H_\gamma f,g\rangle \leq C\Lambda_{\mathcal{S},r,s}(f,g).
\]

Let $A_\lambda^\gamma$ be the single scale operator defined in (\ref{average}) and
\[
\widetilde{A}_\lambda^\gamma h(x):=\int_{\frac{\lambda}{2}\leq t<\lambda}h(x-\gamma(t))\,dt
\]be the averaging operator of the same scale (for $t$ positive). Without loss of generality, one can assume that the function $h$ is nonnegative. It is justified in \cite{TW} that for any small neighborhood of $0$, say, $B_\epsilon(0)$ for $\epsilon$ to be determined later, there exists a sequence of nonnegative functions $\{f_k\}$ supported in $B_{\epsilon}(0)$ such that $\|f_k\|_{L^r}=1$, $\forall k$, while
\[
\|\widetilde{A}^{\gamma}_1f_k\|_{L^{s'}}\to \infty.
\]Since the support of $\{f_k\}$ is small, the positive and negative halves of $A^\gamma_1f_k$ (the part of the integral over positive and negative $t$ respectively) are supported disjointly from each other, hence
\[
\|A^\gamma_1 f_k\|_{L^{s'}}\gtrsim \|\widetilde{A}^{\gamma}_1f_k\|_{L^{s'}}\to \infty.
\]

Therefore, there exists a sequence of nonnegative functions $\{g_k\}$ such that $\|g_k\|_{L^s}=1$, $\forall k$, while
\[
\langle A^\gamma_1 f_k,g_k\rangle=\|A^\gamma_1 f_k\|_{L^{s'}}\to \infty.
\]One can obviously assume that $g_k$ is supported in an $\epsilon$-neighborhood of $\left\{\gamma(t):\,t\in\left[\frac{1}{2},1\right]\right\}$, which implies that
\[
\Lambda_{\mathcal{S}(f_k,g_k),r,s}(f_k,g_k)\gtrsim\langle H_\gamma f_k,g_k\rangle=\sum_{j=-1}^1 \langle A^\gamma_{2^j} f_k,g_k\rangle\geq \langle A^\gamma_1 f_k,g_k\rangle\to \infty.\]The last inequality above follows from the fact that all three forms in the intermediate step are nonnegative due to the support consideration of $g_k$. However, observe that
\[
\text{Dist}\,(\text{spt}\,f_k,\text{spt}\,g_k)\gtrsim 1,
\]hence the sparse form satisfies
\[
\Lambda_{\mathcal{S}(f_k,g_k),r,s}(f_k,g_k)\lesssim \|f_k\|_{L^r}\|g_k\|_{L^s}=1,
\]which is a contradiction.




\bibliography{HTcurves}
%
\bibliographystyle{amsplain}
\end{document}